\numberwithin{equation}{section}
\numberwithin{theorem}{section}
\numberwithin{corollary}{section}
\newtheorem{lem}{Lemma}[section]
\newtheorem{rem}{Remark}[section]
\numberwithin{equation}{section}
\numberwithin{lem}{section}
\numberwithin{rem}{section}
\def\b#1{{\bf#1}}
\begin{document}

\title{
$\b H(\mathrm{curl}^2)$ conforming element for Maxwell's transmission eigenvalue problem using fixed-point approach
}


\author{Jiayu Han \and Zhimin Zhang
       }

\authorrunning{
        \and  \and } 

\institute{           
           Jiayu Han\at
              Beijing Computational Science Research Center, Beijing, 100193, China\\
              School of Mathematical Sciences, Guizhou Normal University, 550001, China\\
              \email{hanjiayu@csrc.ac.cn}
              \and
              Zhimin Zhang\at
              Beijing Computational Science Research Center, Beijing, 100193, China\\
              Department of Mathematics, Wayne State University, Detroit, MI 48202, USA\\
              \email{zmzhang@csrc.ac.cn}
}

\pagestyle{plain} \textwidth 145mm \textheight 215mm \topmargin 0pt

\maketitle
\begin{abstract}
 Using newly developed $\b H(\mathrm{curl}^2)$ conforming elements, we solve the Maxwell's transmission eigenvalue problem.
Both real and complex eigenvalues are considered. Based on the fixed-point weak formulation with reasonable assumptions,
the optimal error estimates for numerical eigenvalues and eigenfunctions (in the ${\bf H}(\mathrm{curl}^2)$-norm and ${\bf H}(\mathrm{curl})$-semi-norm) are established.
Numerical experiments are performed to verify the theoretical assumptions { and} confirm our theoretical analysis.
\keywords{Maxwell's transmission eigenvalues \and curl-curl conforming element \and  Error estimates. }
\end{abstract}

\section{Introduction}

The transmission eigenvalue problem has important applications in the area of inverse scattering{, e.g., simulating non-destructive test} of anisotropic materials.
{ For some background materials such as existence theory, application, and reconstruction of transmission eigenvalues, we refer readers to \cite{cakoni1,cakoni2,cakoni21,cakoni3,colton1} and references therein.
Naturally,  numerical computation of transmission eigenvalues has attracted the attention of scientific community.  There have been some research works on numerical methods for} Helmholtz  transmission eigenvalue problem (HTEP),
{ see, e.g., } \cite{an,colton2,cakoni,ji1,kleefeld1,yang,yang3}.
{ However, numerical} treatment of the Maxwell's transmission eigenvalue problem (MTEP) is relatively rare.
{ An earlier work on the subject can be found} in \cite{monk1} where a {curl-conforming and a mixed finite element were proposed.
The authors reduced the MTEP} to two coupled eigenvalue problems involving  { the second-order curl operator}.
Huang et al. \cite{huang} proposed an eigensolver for computing  a few smallest positive   Maxwell's
transmission eigenvalues.
More recently An and Zhang \cite{an1}  {studied} a spectral method for MTEP on spherical domains and  {obtained} numerical eigenvalues { with} superior accuracy.
The MTEP is { a non-self-adjoint and non-elliptic problem involving the quad-curl operator, which makes the error analysis of its numerical methods difficult} (see the concluding remark in \cite{monk}).
 { There have been some related works on numerical methods for equations with the quad-curl operator
 and the associated eigenvalue problems} \cite{zheng,zhang1,hong,sun1,brenner1,wang}.

In the {  finite element error analysis for HTEP}, the solution operator of its source problem is readily defined to guarantee its compactness in { the} solution space. { However, }
it is difficult to define a { compact} solution operator for { MTEP in $\b H(\mathrm{curl}^2)$. Fortunately,} the fixed-point weak formulation in \cite{cakoni2,cakoni21,cakoni3,paivarinta,sun2} for { MTEP leads} to a
source problem with a well-defined compact solution operator { whose image space is also contained in $\b H(\mathrm{div})$.} The fixed-point weak formulation  is a generalized  eigenvalue problem with the eigenvalue as its { parameter.
 To solve it numerically, an iterative method is usually adopted. An analysis framework of the iterative method for HTEP is well established in \cite{sun} and further developed in \cite{xie}, which motivate us to use it for MTEP}.

Recently  Zhang and Hu et al. \cite{zhang,hu,hu1}   { proposed  $\b H(\mathrm{curl}^2)$-conforming} (or curl-curl conforming) finite elements for solving PDEs with the quad-curl operator.
In this paper, { we use these newly developed $\b H(\mathrm{curl}^2)$-conforming elements to solve MTEP in anisotropic  inhomogeneous medium. Thanks to the conformity of the finite element space, it makes possible to establish convergence theory for the proposed method}.
We first prove the coercivity of bilinear form of the fixed-point weak formulation. { Then} we prove the uniform convergence of discrete operator in $\b H_0(\mathrm{curl}^2,D)$.
Under the assumption on the uniform lower bound { (which can be verified numerically) of the} discrete fixed-point function, the error estimate of discrete eigenvalue is proved using { the} Lagrange mean value theorem.
{ Our analysis also  includes the complex eigenvalue case with the fixed-point weak formulation being modified to guarantee the coercivity of the sesqui-linear} form.

{ To the best of our knowledge, this is the first numerical method with theoretical proof for MTEP with variable coefficients on general polygonal and polyhedral domains.}

  The rest of the paper is organized as follows.
  In Section 2, the fixed-point weak formulation  and its curl-curl conforming element discretization is given then the Maxwell's transmission eigenvalue is expressed as the root of a fixed-point function.
  In Section 3, we discuss the error estimates for real eigenvalues.   The solution operator and some  associated discrete operators are defined and  the compactness of  the solution operator is stated.
  The optimal error estimates are proved using the approximation relations among discrete operators and Babuska-Osborn's theory.
  The error estimates for complex eigenvalues are proved in Section 4.
  Finally,   in Section 5 we present several numerical examples with different indices of fraction to validate the assumption on the uniform lower bound of discrete fixed-point function  and convergence order of curl-curl conforming element.
  The upper boundedness property of the real numerical eigenvalues is also verified in this section.

\section{Preliminaries}
\indent In this paper, we consider the  Maxwell's transmission
eigenvalue problem: Find $k\in \mathbb{C}$, $\bm w, \bm \sigma\in \mathbf L^{2}(D)$,
$\bm w-\bm \sigma\in \mathbf{H}_0(\mathrm{curl}^2,D)$ such that
\begin{eqnarray}\label{ss1}
&& \mathrm{curl}^2 \bm w-k^{2}N\bm w=0,~~~in~ D,\\\label{s2}
 && \mathrm{curl}^2 \bm \sigma-k^{2}\bm \sigma=0,~~~in~ D,\\\label{s3}
 &&\nu\times(\bm w-\bm \sigma)=0,~~~on~ \partial
D,\\\label{s4}
 &&\nu\times\mathrm{curl} (\bm w-\bm \sigma)=0,~~~ on~\partial D,
\end{eqnarray}
where $D\subset \mathbb{R}^{d}$ ($d=2,3$)  is a bounded simply
connected set containing an inhomogeneous medium, and $\nu$ is the
unit outward normal
to $\partial D$. We assume that $N(\bm x)$ is real-valued satisfying $(N(\bm x)-I)^{-1}\in \b W^{1,\infty}(D)$ and
 \begin{align}
   1<N_*\le \overline{\xi}\cdot N(\bm x) \xi\le N^*<\infty,~\|\xi\|=1.
 \end{align}
With obvious changes the analysis approach in this paper is suitable for
 \begin{align}
     \overline{\xi}\cdot N(\bm x) \xi\le N^*<1,~\|\xi\|=1.
 \end{align}
Throughout this paper we adopt the following function spaces
\begin{align*}
&\mathbf{H}(\mathrm{curl},D):=\{ \bm v\in \mathbf L^2(D): \mathrm{curl} \bm v \in\mathbf L^2(D)\},\\
&\mathbf{H}(\mathrm{curl}^s,D):=\{ \bm v\in \mathbf L^2(D): \mathrm{curl}^j \bm v \in\mathbf L^2(D),1\le j\le s\},
\end{align*}
equipped with the norms $\|\cdot\|_{1,\mathrm{curl}}$ and $\|\cdot\|_{s,\mathrm{curl}}$, respectively,
\begin{align*}
  &\mathbf{H}_0(\mathrm{curl}^s,D):=\{ \bm v\in \mathbf L^2(D): \mathrm{curl}^j \bm v \in\mathbf L^2(D),\mathrm{curl}^{j-1}\bm v\times \bm n|_{\partial D}=0,1\le j\le s\},\\
  &\b{H}(\mathrm{div},D):=\{\bm v\in \mathbf L^2(D):~ \mathrm{div} \bm v \in \b L^2(D)\}.
\end{align*}

\indent From \cite{monk1} we know that for $\bm u=\bm w-\bm \sigma\in
\mathbf{H}_0(\mathrm{curl}^2,D)$,  the weak formulation for the transmission eigenvalue
problem (\ref{ss1})-(\ref{s4}) can be stated as follows: Find $k\in\mathbb C$ and $\bm u \in\mathbf{H}_0(\mathrm{curl}^2,D)$ such that
\begin{eqnarray}\label{2.5}
\left((N-I)^{-1}(\mathrm{curl}^2 \bm u-k^{2} \bm u),\mathrm{curl}^2 \bm v-\overline{k^{2}} N\bm v
\right)=0,~~~\forall \bm v \in\mathbf{H}_0(\mathrm{curl}^2,D),
\end{eqnarray}
Let $\tau=k^2$ as usual. Following the treatment approach in \cite{cakoni2,cakoni21,cakoni3,paivarinta,sun,sun2}, we
consider the  eigenvalue problem in the weak form
\begin{align}\label{1.6}
 \mathcal A_\tau(\bm u,\bm v)=\tau \mathcal B(\bm u,\bm v),~\forall \bm v \in\mathbf{H}_0(\mathrm{curl}^2,D)
\end{align}
where $$ \mathcal A_\tau(\bm u,\bm v)=\left((N-I)^{-1}(\mathrm{curl}^2 \bm u-\tau \bm u),\mathrm{curl}^2 \bm v-\overline{\tau} \bm v
\right)+\tau^2(\bm u,\bm v),$$
$$ \mathcal B(\bm u,\bm v)=(\mathrm{curl} \bm u,\mathrm{curl} \bm v).$$
We will consider the edge element approximations based on the  weak formulation (\ref{1.6}).
The authors in \cite{hu} propose three families of curl-curl conforming elements. For simplicity in this paper we merely show the lowest order element in \cite{hu} and the second family in  \cite{zhang}.
Let $\pi_h$ be a  regular triangulation of $D$ composed of the elements ${\kappa}$.
The curl-curl conforming  edge element  \cite{zhang} generates the spaces
 $$\mathbf{U}_{h}=\{\bm v_h\in\mathbf H_0(\mathrm{curl}^2,D):\bm v_h|_\kappa\in \b P_l(\kappa)\bigoplus\{ \bm p\in\b{\widetilde{P}}_{l+1}(\kappa):\bm{x}\cdot\bm p=0\},~\forall \kappa\in\pi_h\},$$
where $\b P_l(\kappa)$ is the polynomial space of  degree less than or equal to $l$($l\ge3$) on $\kappa$ and $\b{\widetilde{P}}_{l+1}(\kappa)$ is the
homogeneous polynomial space of  degree $l+1$  on $\kappa$.
The lowest order element  \cite{hu} generates the spaces
 $$\mathbf{U}_{h}=\{\bm v_h\in\mathbf H_0(\mathrm{curl}^2,D):\bm v_h|_\kappa\in \nabla P_1(\kappa)\bigoplus \mathfrak p W_1(\kappa),~\forall \kappa\in\pi_h\},$$
 where $W_1(\kappa):=P_1(\kappa)\bigoplus span\{\lambda_1\lambda_2\lambda_3\}$,
 $\mathfrak p  v:=\int_0^1 t\bm x^\bot v(t\bm x)dt$ and $\bm x^\bot=(-x_2,x_1)^T$.
Adopting the curl-curl conforming element, we give the discrete form of the Maxwell's transmission eigenvalue problem
\begin{align}\label{1.7}
 \mathcal A_{\tau_h}(\bm u_h,\bm v_h)=\tau_h \mathcal B(\bm u_h,\bm v_h),~\forall \bm v _h \in~\mathbf{U}_h.
\end{align}
Now we consider the following generalized eigenvalue problem:
\begin{align}\label{1.8}
 \mathcal A_{\tau}(\bm u,\bm v)=\lambda(\tau) \mathcal B(\bm u,\bm v),~\forall \bm v  \in\mathbf{H}_0(\mathrm{curl}^2,D)
\end{align}
and its discrete form
\begin{align}\label{1.9}
 \mathcal A_{\tau}(\bm u_h,\bm v_h)=\lambda_h(\tau) \mathcal B(\bm u_h,\bm v_h),~\forall \bm v _h \in\mathbf{U}_h.
\end{align}
Then $\lambda(\tau)$ and $\lambda_h(\tau)$ is a continuous function of $\tau$. From (\ref{1.6}), the Maxwell's transmission eigenvalue is the root of
$$f(\tau):=\lambda(\tau)-\tau$$
while the discrete eigenvalue in  (\ref{1.7}) is the root of
$$f_h(\tau):=\lambda_h(\tau)-\tau.$$

We need the following error estimates for the curl-curl element interpolation.

\begin{lem}[Theorem 3.4 in \cite{zhang} or Theorem 5.1 in \cite{hu}] If $\bm v\in \bm H^{s-1}(\kappa)$ and $\mathrm{curl}\bm v\in \bm H^s(\kappa),1+\delta\le s\le l+1$ with $\delta>0$ then there hold
the following error estimates for the finite element interpolation $I_h$
\begin{align}\label{3.10}
&\|\bm v-I_h\bm v\|_{0,\kappa}+\|\mathrm{curl}(\bm v-I_h\bm v)\|_{0,\kappa}\lesssim h_\kappa^{s-1}\|\bm v\|_{s-1,\kappa}+h_\kappa^{s}\|\mathrm{curl}\bm v\|_{s,\kappa},\\
&\|\mathrm{curl}^2(\bm v-I_h\bm v)\|_{0,\kappa}\lesssim h_\kappa^{s-1}\|\mathrm{curl}\bm v\|_{s,\kappa},\label{3.11}
\end{align}
where
the symbols $a \lesssim b$ and $a \gtrsim b$    mean that $a \le Cb$ and $a \ge Cb$ respectively, and $C$ denotes a positive constant independent of   mesh parameters
and may not be the same   in different places. For the lowest order curl-curl conforming element, the above estimates are valid with $s=2$.  
\end{lem}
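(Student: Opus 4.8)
The plan is to prove this as a standard local (element-wise) finite element interpolation estimate, following the reference-element-plus-scaling paradigm together with the commuting-diagram structure of the curl-curl conforming family. Since both inequalities live on a single element $\kappa$, I would first fix the affine map $F_\kappa:\hat\kappa\to\kappa$, $F_\kappa(\hat{\bm x})=B\hat{\bm x}+\bm b$, from a reference simplex $\hat\kappa$, and pull the problem back to $\hat\kappa$ via the covariant Piola transform $\bm v(\bm x)=B^{-T}\hat{\bm v}(\hat{\bm x})$. Under this transform $\mathrm{curl}\bm v$ and $\mathrm{curl}^2\bm v$ transform with prescribed powers of $B$ and $\det B$ (contravariant Piola for $\mathrm{curl}\bm v$, covariant again after the second curl), and, crucially, the degrees of freedom of the element are constructed so that the interpolation commutes with the pullback, i.e. $\widehat{I_h\bm v}=\hat I\,\hat{\bm v}$ with $\hat I$ the fixed interpolation on $\hat\kappa$. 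This reduces both estimates to a bound for $\hat I$ on $\hat\kappa$ plus a scaling count.

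For the first inequality I would argue on $\hat\kappa$ by a Bramble--Hilbert (Deny--Lions) argument. The operator $\hat I$ is bounded on $\{\hat{\bm v}:\hat{\bm v}\in\bm H^{s-1}(\hat\kappa),\ \widehat{\mathrm{curl}}\,\hat{\bm v}\in\bm H^{s}(\hat\kappa)\}$, and this is exactly where the hypothesis $s\ge 1+\delta$, $\delta>0$, enters: the edge and face moments defining the degrees of freedom require continuous traces of $\hat{\bm v}$ and of $\widehat{\mathrm{curl}}\,\hat{\bm v}$, which the Sobolev embedding supplies only strictly above the critical smoothness. Since $\hat I$ reproduces the local polynomial space, which contains $\bm P_l$ with $l\ge s-1$, the Deny--Lions lemma at the (possibly fractional) smoothness order $s-1$ gives $\|\hat{\bm v}-\hat I\hat{\bm v}\|_{0,\hat\kappa}+\|\widehat{\mathrm{curl}}(\hat{\bm v}-\hat I\hat{\bm v})\|_{0,\hat\kappa}\lesssim |\hat{\bm v}|_{s-1,\hat\kappa}+|\widehat{\mathrm{curl}}\,\hat{\bm v}|_{s,\hat\kappa}$. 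The two different seminorms appear because the degrees of freedom split into a group sampling $\hat{\bm v}$ and a group sampling $\widehat{\mathrm{curl}}\,\hat{\bm v}$, and Bramble--Hilbert is applied to each group with its natural smoothness. Mapping back with $\|B\|\lesssim h_\kappa$, $\|B^{-1}\|\lesssim h_\kappa^{-1}$, $|\det B|\sim h_\kappa^{d}$ on a shape-regular mesh, the seminorm $|\hat{\bm v}|_{s-1,\hat\kappa}$ produces the factor $h_\kappa^{s-1}\|\bm v\|_{s-1,\kappa}$ while $|\widehat{\mathrm{curl}}\,\hat{\bm v}|_{s,\hat\kappa}$ produces $h_\kappa^{s}\|\mathrm{curl}\bm v\|_{s,\kappa}$, which is (\ref{3.10}).

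For the second inequality I would exploit the commuting diagram characterising this element family: $\mathrm{curl}\circ I_h=\Pi_h\circ\mathrm{curl}$, where $\Pi_h$ is the curl-conforming (N\'ed\'elec-type) interpolation onto the curl of the local space. Then $\mathrm{curl}^2(\bm v-I_h\bm v)=\mathrm{curl}\big(\mathrm{curl}\bm v-\Pi_h(\mathrm{curl}\bm v)\big)$, so the quad-curl interpolation error of $\bm v$ is nothing but the ordinary curl-interpolation error of the field $\mathrm{curl}\bm v\in\bm H^{s}(\kappa)$. Applying the standard edge-element estimate to $\mathrm{curl}\bm v$ (one curl costs one derivative, hence order $s-1$) yields $\|\mathrm{curl}^2(\bm v-I_h\bm v)\|_{0,\kappa}\lesssim h_\kappa^{s-1}\|\mathrm{curl}\bm v\|_{s,\kappa}$, which is (\ref{3.11}); only $\mathrm{curl}\bm v$ enters because $\mathrm{curl}^2$ never sees $\bm v$ itself.

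I expect the genuine difficulties to be twofold. First, verifying the commuting identity $\mathrm{curl}\circ I_h=\Pi_h\circ\mathrm{curl}$ and the polynomial reproduction for these non-standard curl-curl degrees of freedom, which is element-specific bookkeeping rather than soft analysis. Second, the unisolvence and trace arguments needed to show that $\hat I$ is well defined and bounded on the stated regularity class, and to pin down the sharp exponent $s\ge 1+\delta$; this is precisely where the low-regularity case would break down. The scaling count and the Bramble--Hilbert step are then routine. Since the statement coincides with Theorem 3.4 in \cite{zhang} (respectively Theorem 5.1 in \cite{hu}), one may alternatively simply invoke those results.
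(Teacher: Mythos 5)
The paper offers no proof of this lemma: it is imported verbatim as Theorem~3.4 of \cite{zhang} (resp.\ Theorem~5.1 of \cite{hu}), so the ``paper's own proof'' is precisely the citation you mention in your last sentence. Your sketch is nevertheless the standard route that those references follow --- affine pullback via the covariant Piola map, Bramble--Hilbert on the reference element for (\ref{3.10}), and a commuting-diagram reduction of $\mathrm{curl}^2(\bm v-I_h\bm v)$ to a one-derivative-lower interpolation error of $\mathrm{curl}\,\bm v$ for (\ref{3.11}) --- and the scaling exponents you count ($h_\kappa^{s-1}$ against $\|\bm v\|_{s-1,\kappa}$, $h_\kappa^{s}$ against $\|\mathrm{curl}\,\bm v\|_{s,\kappa}$) match the statement. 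Two details deserve correction or emphasis. First, the elements in question are two-dimensional, so $\mathrm{curl}\,\bm v$ is a scalar and the commuting partner $\Pi_h$ of $I_h$ under $\mathrm{curl}$ is an $H^1$-conforming \emph{scalar} interpolation (the curl-curl spaces sit in a Stokes/de~Rham complex ending in $H^1$), not a N\'ed\'elec edge interpolation; the identity $\|\mathrm{curl}^2(\bm v-I_h\bm v)\|_{0,\kappa}=|\mathrm{curl}\,\bm v-\Pi_h(\mathrm{curl}\,\bm v)|_{1,\kappa}$ then gives (\ref{3.11}) by the usual $H^1$ estimate. Second, affine equivalence of these elements under the Piola map is not automatic: several degrees of freedom sample $\mathrm{curl}\,\bm v$ and its derivatives, which pick up $\det B$ factors under pullback, and verifying that $\widehat{I_h\bm v}=\hat I\hat{\bm v}$ (or working around its failure) is where the element-specific effort in \cite{zhang} and \cite{hu} actually lies --- you correctly flag this, together with the trace/unisolvence issue behind the restriction $s\ge 1+\delta$, as the genuine content. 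As a proof \emph{for this paper}, simply invoking the cited theorems, as both you and the authors do, is the intended and sufficient argument.
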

\section{Error estimates for real eigenvalues}
In  this section we let the eigenvalue $\tau\ne0$ in (\ref{1.6}) be a real number.
The following lemma provides useful properties of the generalized eigenvalue problems.
\begin{lem}
$\mathcal A_\tau$ is a coercive sesquilinear form on $\mathbf{H}_0(\mathrm{curl}^2,D)$.
\end{lem}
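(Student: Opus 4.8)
The plan is to prove the lower bound $\mathcal A_\tau(\bm u,\bm u)\gtrsim\|\bm u\|_{2,\mathrm{curl}}^2$ directly from the definition, using that $\tau$ is real in this section (so $\overline\tau=\tau$) and that $N>I$. First I would record two structural facts. From the coefficient bound $1<N_*\le\overline\xi\cdot N\xi\le N^*<\infty$, the matrix field $N-I$ satisfies $0<(N_*-1)I\le N-I\le(N^*-1)I$, so $(N-I)^{-1}$ is Hermitian positive definite with $(N-I)^{-1}\ge\frac{1}{N^*-1}I$; in particular the first term of $\mathcal A_\tau(\bm u,\bm u)$ is real and bounded below by $\frac{1}{N^*-1}\|\mathrm{curl}^2\bm u-\tau\bm u\|_0^2$. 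Second, for $\bm u\in\mathbf H_0(\mathrm{curl}^2,D)$ the Green formula for the curl operator, combined with the boundary condition $\bm u\times\bm n=0$ built into the space, yields the key identity $(\mathrm{curl}^2\bm u,\bm u)=\|\mathrm{curl}\,\bm u\|_0^2$, which also shows this quantity is real and nonnegative.

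Combining these, for real $\tau$ I would expand the square to get
\[
\mathcal A_\tau(\bm u,\bm u)\ge\frac{1}{N^*-1}\Bigl(\|\mathrm{curl}^2\bm u\|_0^2-2\tau\|\mathrm{curl}\,\bm u\|_0^2+\tau^2\|\bm u\|_0^2\Bigr)+\tau^2\|\bm u\|_0^2.
\]
The only indefinite contribution is $-2\tau\|\mathrm{curl}\,\bm u\|_0^2$, and controlling it is the crux. I would again use the identity and Cauchy--Schwarz, $\|\mathrm{curl}\,\bm u\|_0^2=(\mathrm{curl}^2\bm u,\bm u)\le\|\mathrm{curl}^2\bm u\|_0\|\bm u\|_0$, and then Young's inequality $2|\tau|\,\|\mathrm{curl}^2\bm u\|_0\|\bm u\|_0\le\epsilon\|\mathrm{curl}^2\bm u\|_0^2+\frac{\tau^2}{\epsilon}\|\bm u\|_0^2$ with a free parameter $\epsilon>0$. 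Substituting, the coefficient of $\|\mathrm{curl}^2\bm u\|_0^2$ becomes $\frac{1-\epsilon}{N^*-1}$ and that of $\|\bm u\|_0^2$ becomes $\tau^2\bigl(1+\frac{1-1/\epsilon}{N^*-1}\bigr)$.

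The decisive point is that both coefficients can be made strictly positive at once: positivity of the first needs $\epsilon<1$, while the second reduces to $1/\epsilon<N^*$, i.e. $\epsilon>1/N^*$, and since $N^*\ge N_*>1$ the interval $(1/N^*,1)$ is nonempty. Fixing any such $\epsilon$ (and using $\tau\ne0$) gives $\mathcal A_\tau(\bm u,\bm u)\gtrsim\|\mathrm{curl}^2\bm u\|_0^2+\|\bm u\|_0^2$ with a constant depending on $\tau,N_*,N^*$ but not on $\bm u$. It then remains to reinsert the intermediate term: the identity and Cauchy--Schwarz once more give $\|\mathrm{curl}\,\bm u\|_0^2\le\frac{1}{2}\bigl(\|\mathrm{curl}^2\bm u\|_0^2+\|\bm u\|_0^2\bigr)$, so the full $\mathbf H(\mathrm{curl}^2)$-norm is controlled and coercivity follows. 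I expect the main obstacle to be exactly the bookkeeping around this cross term --- checking that the added $\tau^2(\bm u,\bm v)$ term supplies precisely the $\|\bm u\|_0^2$ contribution needed to offset $-2\tau\|\mathrm{curl}\,\bm u\|_0^2$ after the Young split, which is where the hypothesis $N>I$ (hence $N^*>1$) enters essentially. Boundedness of the form, should it be needed, is then immediate from Cauchy--Schwarz together with $(N-I)^{-1}\in\b W^{1,\infty}(D)\subset L^\infty(D)$.
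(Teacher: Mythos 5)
Your proof is correct and follows essentially the same route as the paper: both bound the weighted term below by $(N^*-1)^{-1}\|\mathrm{curl}^2\bm u-\tau\bm u\|^2$, expand, absorb the cross term via Cauchy--Schwarz and Young with a free parameter whose admissible window $(1/N^*,1)$ is exactly the paper's condition $\gamma<\epsilon<\gamma+1$ after the substitution $\epsilon\mapsto\gamma/\epsilon$, and finish with the interpolation bound $2\|\mathrm{curl}\,\bm u\|\le\|\bm u\|+\|\mathrm{curl}^2\bm u\|$ (which you derive directly where the paper cites Lemma 2.1 of Hong et al.). No gaps.
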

\begin{proof}Pick up any $\bm u\in\mathbf{H}_0(\mathrm{curl}^2,D)$. We have
\begin{align*}
|\mathcal A_\tau(\bm u,\bm u)|&\ge \gamma\|\mathrm{curl}^2\bm u-\tau \bm u\|^2+\tau^2\|\bm u\|^2\\
                 &\ge\gamma\|\mathrm{curl}^2\bm u\|^2-2\gamma\tau \|\mathrm{curl}^2\bm u\|\|\bm u\|+(\tau^2\gamma+\tau^2)\|\bm u\|^2\\
                 &=\epsilon(\tau \|\bm u\|-\frac\gamma\epsilon\|\mathrm{curl}^2\bm u\|)^2+(\gamma-\frac{\gamma^2}\epsilon)\|\mathrm{curl}^2\bm u\|^2+(1+\gamma-\epsilon)\tau^2\|\bm u\|^2
\end{align*}
where $\gamma=(N^*-1)^{-1}$ and $\gamma<\epsilon<\gamma+1$. Thanks to the result of Lemma 2.1 in \cite{hong}
\begin{align*}
2\|\mathrm{curl}\bm u\|\le\| u\|
                 +\|\mathrm{curl}^2\bm u\|
\end{align*}
the assertion is valid.
\end{proof}

Based on Lemma 3.1, we can define the solution operator $T_\tau\bm f\in \mathbf{H}_0(\mathrm{curl}^2,D)$:
\begin{align}\label{1.10s}
 \mathcal A_{\tau}(T_\tau\bm f,\bm v)=\mathcal B(\bm f,\bm v),~\forall \bm v  \in\mathbf{H}_0(\mathrm{curl}^2,D)
\end{align}
and its discrete operator
$T_{\tau,h}\bm f\in \mathbf{U}_h$:
\begin{align}\label{3.2}
 \mathcal A_{\tau}(T_{\tau,h}\bm f,\bm v)=\mathcal B(\bm f,\bm v),~\forall \bm v  \in\mathbf{U}_h.
\end{align}
Next we shall analyze the compactness of the operator $T_{\tau}$ for $\tau\ne0$. It is obvious from Lemma 3.1
\begin{align}\label{1.12s}
 \|T_\tau\bm f\|_{2,\mathrm{curl}}\lesssim\|\mathrm{curl}\bm f\|_{},~~\forall \bm f\in\{\bm v: \mathrm{curl}\bm v\in L^2(D)\}.
\end{align}

\begin{lem}
$T_{\tau}$($\tau\ne0$) is compact in  $\b H_0(\mathrm{curl}^2,D)$.
\end{lem}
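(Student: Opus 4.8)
The plan is to establish compactness through the weak-to-strong continuity of $T_\tau$: I will show that if $\bm f_n\rightharpoonup\bm f$ weakly in $\mathbf H_0(\mathrm{curl}^2,D)$, then $\bm w_n:=T_\tau\bm f_n$ converges strongly to $\bm w:=T_\tau\bm f$. The a priori bound (\ref{1.12s}) already shows that $T_\tau$ is a bounded linear operator on $\mathbf H_0(\mathrm{curl}^2,D)$, hence weakly continuous, so from $\bm f_n\rightharpoonup\bm f$ I obtain $\bm w_n\rightharpoonup\bm w$ weakly in $\mathbf H_0(\mathrm{curl}^2,D)$; in particular $\{\bm w_n\}$ is bounded in $\mathbf H(\mathrm{curl}^2,D)$ and $\mathrm{curl}\,\bm w_n\rightharpoonup\mathrm{curl}\,\bm w$ weakly in $\mathbf L^2(D)$.

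The key ingredient is a compact embedding applied to the curls. Since $\bm w_n\in\mathbf H_0(\mathrm{curl}^2,D)$, the field $\mathrm{curl}\,\bm w_n$ lies in $\mathbf H_0(\mathrm{curl},D)$ (its tangential trace vanishes by the $j=2$ boundary condition in the definition of $\mathbf H_0(\mathrm{curl}^2,D)$), is divergence free because $\mathrm{div}\,\mathrm{curl}=0$, and has $\mathrm{curl}^2\bm w_n$ bounded in $\mathbf L^2(D)$. Thus $\{\mathrm{curl}\,\bm w_n\}$ is bounded in $\mathbf H_0(\mathrm{curl},D)\cap\mathbf H(\mathrm{div},D)$, which embeds compactly into $\mathbf L^2(D)$ for the bounded simply connected domain $D$. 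Passing to a subsequence, $\mathrm{curl}\,\bm w_n$ converges strongly in $\mathbf L^2(D)$, and the weak limit above forces the strong limit to equal $\mathrm{curl}\,\bm w$, so $\mathrm{curl}(\bm w_n-\bm w)\to\bm 0$ in $\mathbf L^2(D)$.

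Finally I will bootstrap from the curls to the full $\mathbf H(\mathrm{curl}^2,D)$-norm using the coercivity of Lemma 3.1. Subtracting the two instances of (\ref{1.10s}) defining $\bm w_n$ and $\bm w$ and testing with $\bm v=\bm w_n-\bm w\in\mathbf H_0(\mathrm{curl}^2,D)$ gives $\mathcal A_\tau(\bm w_n-\bm w,\bm w_n-\bm w)=(\mathrm{curl}(\bm f_n-\bm f),\mathrm{curl}(\bm w_n-\bm w))$. Lemma 3.1 bounds the left-hand side below by a constant times $\|\bm w_n-\bm w\|_{2,\mathrm{curl}}^2$, while Cauchy–Schwarz bounds the right-hand side by $\|\mathrm{curl}(\bm f_n-\bm f)\|\,\|\mathrm{curl}(\bm w_n-\bm w)\|$. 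Since $\{\bm f_n\}$ is bounded the first factor stays bounded and the second tends to $0$ by the previous step, whence $\bm w_n\to\bm w$ strongly in $\mathbf H_0(\mathrm{curl}^2,D)$ and $T_\tau$ is compact.

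I expect the only real obstacle to be the compact embedding step, namely verifying that $\mathrm{curl}\,\bm w_n$ genuinely inherits both the vanishing tangential trace and the divergence-free constraint needed to invoke the classical Weber-type compactness theorem for $\mathbf H_0(\mathrm{curl},D)\cap\mathbf H(\mathrm{div},D)$, and that the geometric hypotheses on $D$ suffice for it. An alternative that exploits the image-space regularity mentioned in the introduction is to apply the same compact embedding to $\bm w_n$ itself (using $T_\tau\bm f\in\mathbf H(\mathrm{div},D)$) after rewriting the right-hand side as $(\mathrm{curl}^2(\bm f_n-\bm f),\bm w_n-\bm w)$ via Green's formula, the boundary term vanishing because $\mathrm{curl}(\bm f_n-\bm f)\times\nu=0$ on $\partial D$; the coercivity bootstrap then closes the argument either way.
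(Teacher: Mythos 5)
Your proof is correct and rests on the same two ingredients as the paper's: the compact embedding of $\mathbf H_0(\mathrm{curl},D)\cap\mathbf H(\mathrm{div},D)$ into $\mathbf L^2(D)$ applied to divergence-free curl fields with vanishing tangential trace, followed by the coercivity of $\mathcal A_\tau$ to upgrade $\mathbf L^2$-convergence of curls to $\mathbf H(\mathrm{curl}^2)$-convergence of the images. The only cosmetic difference is that the paper applies the embedding to $\mathrm{curl}\,\bm f_n$ (the sources) and invokes the a priori bound $\|T_\tau\bm f\|_{2,\mathrm{curl}}\lesssim\|\mathrm{curl}\,\bm f\|$ directly to get a Cauchy image sequence, whereas you apply it to $\mathrm{curl}\,\bm w_n$ (the images) and close with a coercivity bootstrap -- an equivalent route, since that a priori bound is itself a consequence of the coercivity you use.
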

\begin{proof}Let $\{\bm v_i\}_{i=1}^{\infty}$ be a sequence in $\b H_0(\mathrm{curl}^2,D)$ with $\|\bm v_i\|_{2,\mathrm{curl}}\le1$. Then $\{\mathrm{curl}\bm v_i\}_{i=1}^{\infty}\subset \b H_0(\mathrm{curl},D)\cap \b H(\mathrm{div},D)$ have a convergence subsequence in $\b L^2(D)$, still denoted by itself. Thanks to (\ref{1.12s}), $\{T_\tau\bm v_i\}_{i=1}^{\infty}$ is cauchy in $\b H_0(\mathrm{curl}^2,D)$ and so have a convergence point therein. This leads to the compactness of $T_\tau$.
\end{proof}
According to Lemmas 3.1 and 2.1 we have the error estimate for the discrete problem (\ref{3.2})
\begin{align}\label{1.12}
 \|(T_{\tau,h}-T_\tau)\bm f\|_{2,\mathrm{curl}}\lesssim\inf_{\bm v\in\b H_h}\|T_\tau\bm f-\bm v\|_{2,\mathrm{curl}}\lesssim  h^{r-1}(\|T_\tau\bm f\|_{r-1}+\|\mathrm{curl}T_\tau\bm f\|_{r}),~r>1.
\end{align}
We define the Ritz projection $P_{\tau,h}:\b H_0(\mathrm{curl},D)\rightarrow \b U_h$ such that 
\begin{align}
 \mathcal A_\tau(P_{\tau,h}\bm w-\bm w,\bm v)=0,~~\forall \bm v  \in\b U_h.
\end{align}
We have the relation $T_{\tau,h}=P_{\tau,h}T_{\tau}$ between $T_{\tau}$ and $T_{\tau,h}$.
This leads to
\begin{align}\label{1.16}
 \|T_{\tau,h}-T_\tau\|_{2,\mathrm{curl}}=\sup_{\bm f\in \mathbf{H}_0(\mathrm{curl}^2,D)\atop\|\bm f\|_{2,\mathrm{curl}}=1}   \|(I-P_{\tau,h})T_\tau\bm f\|_{2,\mathrm{curl}}\rightarrow0
\end{align}
 where we have used $T_\tau\{\bm f\in \mathbf{H}_0(\mathrm{curl}^2,D):\|\bm f\|_{2,\mathrm{curl}}=1\}$ is a relatively compact set in $\mathbf{H}_0(\mathrm{curl}^2,D)$.


Using the spectral approximation theory \cite{babuska},
we are in a position to establish the following \textit{a} \textit{priori} error estimates for  the finite element approximation (\ref{1.9}).
\begin{lem}
Let $(\lambda_h(\tau),\bm u_{h}(\tau))$ with $\mathcal B(\bm u_{h}(\tau),\bm u_{h}(\tau))=1$ be an eigenvalue of the  problem (\ref{1.9}) that converges to $(\lambda(\tau),\bm u_{}(\tau))$  and the eigenfunction space $M(\lambda(\tau)$ satisfies $M(\lambda(\tau))\subset \b H^{s-1}(D)$, $\mathrm{curl}\left(M(\lambda(\tau))\right)$ $\subset \b H^s(D)$ with $l+1\ge s>1$ then
\begin{eqnarray}\label{2.34t}
&&\|\bm u_{h}(\tau)-\bm u_{}(\tau)\|_{2,\mathrm{curl}}\lesssim
h^{s-1} ,\\
\label{2.35ss}
&&|\lambda(\tau)-\lambda_{h}(\tau)|\lesssim h^{2(s-1)}.
\end{eqnarray}
\end{lem}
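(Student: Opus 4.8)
The plan is to recast the generalized eigenvalue problems \eqref{1.8} and \eqref{1.9} as ordinary eigenvalue problems for the solution operators $T_\tau$ and $T_{\tau,h}$, and then invoke the Babuska--Osborn spectral approximation theory \cite{babuska}. First I would observe that $(\lambda(\tau),\bm u(\tau))$ solves \eqref{1.8} if and only if $T_\tau\bm u(\tau)=\mu\,\bm u(\tau)$ with $\mu=1/\lambda(\tau)$: from \eqref{1.10s} and \eqref{1.8} one gets $\mathcal A_\tau(\bm u-\lambda T_\tau\bm u,\bm v)=0$ for all $\bm v$, whence $\bm u=\lambda T_\tau\bm u$ by the coercivity of Lemma 3.1. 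The identical reasoning applied to \eqref{1.9} and \eqref{3.2} gives $T_{\tau,h}\bm u_h(\tau)=\mu_h\,\bm u_h(\tau)$ with $\mu_h=1/\lambda_h(\tau)$, so the eigenfunctions of \eqref{1.8}--\eqref{1.9} coincide with those of $T_\tau$--$T_{\tau,h}$ and the two eigenvalue problems are spectrally equivalent.

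Next I would exploit the self-adjoint structure available because $\tau$ is real. Since $N$ is real symmetric and positive definite, $(N-I)^{-1}$ is real symmetric, so a direct computation shows $\mathcal A_\tau(\bm u,\bm v)=\overline{\mathcal A_\tau(\bm v,\bm u)}$ and $\mathcal B(\bm u,\bm v)=\overline{\mathcal B(\bm v,\bm u)}$; together with the coercivity of $\mathcal A_\tau$ this makes $\mathcal A_\tau(\cdot,\cdot)$ a genuine inner product on $\bm H_0(\mathrm{curl}^2,D)$ in which $T_\tau$ is self-adjoint and positive semidefinite (indeed $\mathcal A_\tau(T_\tau\bm f,\bm f)=\mathcal B(\bm f,\bm f)=\|\mathrm{curl}\,\bm f\|^2\ge0$). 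Combined with the compactness of $T_\tau$ (Lemma 3.2) and the norm convergence $\|T_{\tau,h}-T_\tau\|_{2,\mathrm{curl}}\to0$ from \eqref{1.16}, this places us in the self-adjoint setting of \cite{babuska}, where for an eigenvalue $\mu$ with eigenspace $M(\lambda(\tau))$ one has
\begin{align*}
\|\bm u_h(\tau)-\bm u(\tau)\|_{2,\mathrm{curl}}&\lesssim \big\|(T_\tau-T_{\tau,h})|_{M(\lambda(\tau))}\big\|_{2,\mathrm{curl}},\\
|\lambda(\tau)-\lambda_h(\tau)|&\lesssim \big\|(T_\tau-T_{\tau,h})|_{M(\lambda(\tau))}\big\|_{2,\mathrm{curl}}^2,
\end{align*}
the doubled rate for the eigenvalue being precisely what self-adjointness buys us (passing from $\mu=1/\lambda$ back to $\lambda$ is harmless since $\mu,\mu_h$ stay bounded away from $0$ for small $h$).

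It then remains to bound the restricted operator error. For any $\bm f\in M(\lambda(\tau))$ with $\|\bm f\|_{2,\mathrm{curl}}=1$ we have $T_\tau\bm f=\mu\bm f\in M(\lambda(\tau))$, so by hypothesis $T_\tau\bm f\in\bm H^{s-1}(D)$ and $\mathrm{curl}\,T_\tau\bm f\in\bm H^{s}(D)$ with norms controlled by a constant. Feeding this regularity into the finite element estimate \eqref{1.12} (taking $r=s$) yields
\begin{align*}
\big\|(T_\tau-T_{\tau,h})|_{M(\lambda(\tau))}\big\|_{2,\mathrm{curl}}\lesssim h^{s-1}\big(\|T_\tau\bm f\|_{s-1}+\|\mathrm{curl}\,T_\tau\bm f\|_{s}\big)\lesssim h^{s-1}.
\end{align*}
Substituting this bound into the two abstract estimates gives \eqref{2.34t} and \eqref{2.35ss} respectively.

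I expect the main obstacle to be the second paragraph: correctly identifying and justifying the self-adjoint structure of the pair $(\mathcal A_\tau,\mathcal B)$ for real $\tau$, since this is exactly what delivers the squared convergence rate $h^{2(s-1)}$ for the eigenvalue. Treated as genuinely non-self-adjoint, the eigenvalue estimate would degrade to a single power of $h$ unless one separately analyzed the adjoint (dual) eigenvalue problem and verified that its eigenfunctions enjoy the same regularity; recognizing that the Hermitian symmetry of $\mathcal A_\tau$ and $\mathcal B$ makes the primal and dual problems coincide removes this difficulty. A secondary technical point is the semidefiniteness of $\mathcal B$, whose kernel consists of gradient fields, which must be handled so that the relevant nonzero eigenvalues $\mu=1/\lambda$ remain isolated with finite multiplicity; this follows from the compactness of $T_\tau$ established in Lemma 3.2.
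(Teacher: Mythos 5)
Your proposal is correct and matches the paper's route: the paper states Lemma 3.3 without proof, simply invoking the Babuska--Osborn spectral approximation theory on the operators $T_\tau$, $T_{\tau,h}$ with the ingredients already in place (coercivity from Lemma 3.1, compactness from Lemma 3.2, norm convergence \eqref{1.16}, and the approximation bound \eqref{1.12}), which is exactly what you flesh out. Your identification of the Hermitian structure of $\mathcal A_\tau$ and $\mathcal B$ for real $\tau$ is the right justification for the doubled eigenvalue rate $h^{2(s-1)}$ (note the paper only imposes an ascent-one hypothesis in the complex case of Lemma 4.1, consistent with your reading).
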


Assume $\tau_h\to\tau^*$. In order to prove the approximation relation between $\lambda_h(\tau_h)$ and $\lambda(\tau_h)$ we introduce an auxiliary    operator
$T_{\tau_h,h}\bm f\in \mathbf{U}_h$:
\begin{align}
 \mathcal A_{\tau_h}(T_{\tau_h,h}\bm f,\bm v)=\mathcal B(\bm f,\bm v),~\forall \bm v  \in \mathbf{U}_h.
\end{align}
Then $\forall \bm v  \in\mathbf{U}_h$
\begin{align*}
\mathcal A_{\tau^*}(T_{\tau_h,h}\bm f-T_{\tau^*,h}\bm f,\bm v)&=
\mathcal A_{\tau^*}(T_{\tau_h,h}\bm f,\bm v)- \mathcal A_{\tau_h}(T_{\tau_h,h}\bm f,\bm v)\\
&\lesssim  |\tau_h-\tau^*|\|T_{\tau_h,h}\bm f\|_{2,\mathrm{curl}}\|\bm v\|_{2,\mathrm{curl}}\\
&\lesssim  |\tau_h-\tau^*|\|\mathrm{curl}\bm f\|_{}\|\bm v\|_{2,\mathrm{curl}}.
\end{align*}
Hence
\begin{align}\label{1.22}
\|T_{\tau_h,h}-T_{\tau^*,h}\|_{2,\mathrm{curl}}\lesssim\sup_{\bm f\in \mathbf{H}_0(\mathrm{curl}^2,D)\atop\|\bm f\|_{2,\mathrm{curl}}=1}\sup_{\bm v\in \mathbf{U}_h\atop\|\bm v\|_{2,\mathrm{curl}}=1}\mathcal A_{\tau^*}(T_{\tau_h,h}\bm f-T_{\tau^*,h}\bm f,\bm v)\lesssim  |\tau_h-\tau^*|\rightarrow0.
\end{align}
Note that
\begin{align}\label{1.16}
 \|T_{\tau^*,h}-T_{\tau^*}\|_{2,\mathrm{curl}}= \|(P_{\tau^*,h}-I)T_{\tau^*}\|_{2,\mathrm{curl}}
 \lesssim
 \sup_{\bm f\in \mathbf{H}_0(\mathrm{curl}^2,D)\atop\|\bm f\|_{2,\mathrm{curl}}=1}   \|(P_{\tau^*,h}-I)T_{\tau^*}\bm f\|_{2,\mathrm{curl}}\rightarrow0.
\end{align}
The above two uniform convergence results give
\begin{align}\label{3.14s}
\|T_{\tau_h,h}-T_{\tau^*}\|_{2,\mathrm{curl}}\rightarrow0.
\end{align}
Meanwhile the similar argument to show (\ref{1.22})  can derive
\begin{align}\label{1.22t}
\|T_{\tau_h}-T_{\tau^*}\|_{2,\mathrm{curl}}\lesssim|\tau_h-\tau^*|\rightarrow0.
\end{align}
Using the standard spectral approximation theory in \cite{babuska,chatelin} then in virtue of  (\ref{3.14s})
and (\ref{1.22t})  we have
\begin{lem}
  Assume $\tau_h\to\tau^*$.  Let $\lambda_h(\tau^*)$, $\lambda(\tau^*)$ and   $M(\lambda(\tau^*))$ be as in Lemma 3.3 then
\begin{eqnarray}
\label{2.35}
|\lambda(\tau_h)-\lambda(\tau^*)|+|\lambda_{h}(\tau_h)-\lambda(\tau^*)|
\lesssim h^{2(s-1)}+|\tau_h-\tau^*|\to0.
\end{eqnarray}
\end{lem}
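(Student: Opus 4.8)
The plan is to transfer everything to the solution operators, whose eigenvalues are the reciprocals of the generalized eigenvalues. Since $\mathcal A_\tau$ is coercive (Lemma 3.1), the same argument that defines $T_\tau$ shows that $(\lambda(\tau),\bm u(\tau))$ solves (\ref{1.8}) if and only if $T_\tau\bm u(\tau)=\lambda(\tau)^{-1}\bm u(\tau)$; likewise $\lambda_h(\tau_h)^{-1}$ is an eigenvalue of the auxiliary operator $T_{\tau_h,h}$, $\lambda_h(\tau^*)^{-1}$ of $T_{\tau^*,h}$, and $\lambda(\tau_h)^{-1}$, $\lambda(\tau^*)^{-1}$ of $T_{\tau_h}$, $T_{\tau^*}$. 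Because $T_{\tau^*}$ is compact (Lemma 3.2) with $\lambda(\tau^*)^{-1}$ an isolated eigenvalue of finite multiplicity, I would invoke the spectral approximation theory of \cite{babuska,chatelin}, which bounds the perturbation of such an eigenvalue (or, in the degenerate case, of the arithmetic mean of the cluster) by the operator-norm perturbation. The one caveat is that $T_{\tau_h,h}$ and $T_{\tau^*}$ arise from the \emph{different} sesquilinear forms $\mathcal A_{\tau_h}$ and $\mathcal A_{\tau^*}$; rather than apply the eigenvalue estimate directly to the pair $(T_{\tau_h,h},T_{\tau^*})$, I would split the two quantities in the claim so that each comparison is either between two continuous operators or between two discrete operators on the common space $\mathbf U_h$.

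For the first term I would apply the perturbation bound to the continuous pair $(T_{\tau_h},T_{\tau^*})$. By (\ref{1.22t}), $\|T_{\tau_h}-T_{\tau^*}\|_{2,\mathrm{curl}}\lesssim|\tau_h-\tau^*|$, so the eigenvalue of $T_{\tau_h}$ nearest $\lambda(\tau^*)^{-1}$ satisfies $|\lambda(\tau_h)^{-1}-\lambda(\tau^*)^{-1}|\lesssim|\tau_h-\tau^*|$. Since $\lambda(\tau^*)\neq0$ and $\lambda(\tau_h)\to\lambda(\tau^*)$, the reciprocals stay bounded away from $0$, and multiplying through by $|\lambda(\tau_h)\lambda(\tau^*)|$ converts this into $|\lambda(\tau_h)-\lambda(\tau^*)|\lesssim|\tau_h-\tau^*|$.

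For the second term I would use the triangle inequality $|\lambda_h(\tau_h)-\lambda(\tau^*)|\le|\lambda_h(\tau_h)-\lambda_h(\tau^*)|+|\lambda_h(\tau^*)-\lambda(\tau^*)|$. The last summand is precisely the eigenvalue estimate of Lemma 3.3 at $\tau=\tau^*$, which gives $h^{2(s-1)}$. For the first summand I would compare the discrete operators $T_{\tau_h,h}$ and $T_{\tau^*,h}$ on $\mathbf U_h$: by (\ref{1.22}), $\|T_{\tau_h,h}-T_{\tau^*,h}\|_{2,\mathrm{curl}}\lesssim|\tau_h-\tau^*|$, and the same perturbation bound together with the reciprocal conversion yields $|\lambda_h(\tau_h)-\lambda_h(\tau^*)|\lesssim|\tau_h-\tau^*|$. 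Adding the two summands and combining with the first term then proves the stated estimate.

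The main obstacle is making the perturbation constants uniform in $h$. The Osborn/Chatelin constants depend on the spectral gap separating the target eigenvalue cluster from the rest of the spectrum and on the norms of the associated spectral projections, and for the discrete comparison these must not degenerate as $h\to0$. I would secure this from the uniform norm convergence $\|T_{\tau^*,h}-T_{\tau^*}\|_{2,\mathrm{curl}}\to0$ in (\ref{1.16}): it guarantees that, for $h$ small, the discrete spectrum of $T_{\tau^*,h}$ near $\lambda(\tau^*)^{-1}$ clusters around it with a gap and projection norms bounded independently of $h$, so the Lipschitz-in-operator-norm bound applies with an $h$-independent constant. A secondary point to verify is that $\lambda_h(\tau^*)$ and $\lambda_h(\tau_h)$ remain bounded away from $0$ uniformly in $h$, which again follows from their convergence to $\lambda(\tau^*)\neq0$.
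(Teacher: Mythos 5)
Your proposal is correct and follows essentially the same route as the paper, which simply invokes the Babuska--Osborn/Chatelin perturbation theory together with the operator bounds $\|T_{\tau_h}-T_{\tau^*}\|_{2,\mathrm{curl}}\lesssim|\tau_h-\tau^*|$ and $\|T_{\tau_h,h}-T_{\tau^*}\|_{2,\mathrm{curl}}\to0$ (the latter built from exactly the splitting $T_{\tau_h,h}-T_{\tau^*,h}$ plus $T_{\tau^*,h}-T_{\tau^*}$ that you use). Your explicit detour through $\lambda_h(\tau^*)$, isolating the Galerkin term that yields the doubled rate $h^{2(s-1)}$ via Lemma 3.3 from the $O(|\tau_h-\tau^*|)$ parameter perturbation, is precisely the argument the paper leaves implicit, and your remarks on the $h$-uniformity of the perturbation constants are a welcome (correct) elaboration.
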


In order to study the convergence of discrete eigenvalues in a bounded interval, we have to verify
their boundedness. The following result,  a direct citation of Theorem 3.3 in \cite{cakoni3}, is given without proof.
\begin{lem}Let $\widetilde\tau_{h}$ and $\widehat\tau_h$ be the eigenvalue of (\ref{1.7}) with $N:=N_*I$ and
$N:=N^*I$, respectively. Then there is an eigenvalue $\tau_{h}$ of (\ref{1.7}) such that
$\widehat\tau_h\le\tau_h\le \widetilde\tau_{h}$.
\end{lem}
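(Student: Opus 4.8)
The plan is to reduce the sandwiching to a monotonicity-in-$N$ property of the generalized eigenvalue branches of (\ref{1.9}), and then to transfer that ordering to the transmission eigenvalues themselves through the fixed-point function $f_h(\tau)=\lambda_h(\tau)-\tau$. Since $\tau$ is real, $(N-I)^{-1}$ is real symmetric, so $\mathcal A_\tau$ is a Hermitian form; by Lemma 3.1 it is coercive, hence positive definite, on $\mathbf{U}_h$, while $\mathcal B(\bm u,\bm u)=\|\mathrm{curl}\,\bm u\|^2\ge0$ is Hermitian positive semidefinite and independent of $N$. Consequently the discrete problem (\ref{1.9}) has a finite increasing family of real eigenvalue branches $\lambda_{h,j}^{N}(\tau)$ (the directions with $\mathrm{curl}\,\bm u=0$ contribute only infinite eigenvalues, which never meet a finite $\tau$), and each branch admits the Courant--Fischer characterization
\[
\lambda_{h,j}^{N}(\tau)=\min_{\substack{S\subset\mathbf{U}_h\\ \dim S=j}}\ \max_{\substack{\bm u\in S\\ \mathcal B(\bm u,\bm u)>0}}\frac{\mathcal A_\tau^{N}(\bm u,\bm u)}{\mathcal B(\bm u,\bm u)} .
\]

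The first key step is coefficient monotonicity. From the spectral bounds $N_*I\le N(\bm x)\le N^*I$ (in the sense of Hermitian matrices) and the fact that $t\mapsto(t-1)^{-1}$ reverses order on symmetric positive definite matrices, I would deduce the pointwise form inequalities $(N^*-1)^{-1}I\le (N(\bm x)-I)^{-1}\le (N_*-1)^{-1}I$. Because the only $N$-dependent term of $\mathcal A_\tau$ is $((N-I)^{-1}\bm g,\bm g)$ with $\bm g=\mathrm{curl}^2\bm u-\tau\bm u$, while the remaining term $\tau^2\|\bm u\|^2$ is independent of $N$, this yields $\mathcal A_\tau^{N^*I}(\bm u,\bm u)\le\mathcal A_\tau^{N}(\bm u,\bm u)\le\mathcal A_\tau^{N_*I}(\bm u,\bm u)$ for every $\bm u$ and every admissible real $\tau$. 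Inserting these into the min--max quotient gives, branch by branch, $\lambda_{h,j}^{N^*I}(\tau)\le\lambda_{h,j}^{N}(\tau)\le\lambda_{h,j}^{N_*I}(\tau)$ for all $\tau$, and hence, after subtracting $\tau$,
\[
f_h^{N^*I}(\tau)\le f_h^{N}(\tau)\le f_h^{N_*I}(\tau),\qquad\forall\,\tau .
\]

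I would then transfer this ordering of functions to an ordering of their roots. Let $\widehat\tau_h$ and $\widetilde\tau_h$ be the roots of $f_h^{N^*I}$ and $f_h^{N_*I}$ on the branch under consideration, that is, the eigenvalues of (\ref{1.7}) for $N=N^*I$ and $N=N_*I$. Evaluating $f_h^{N}$ at these points and using the previous inequalities gives $f_h^{N}(\widehat\tau_h)\ge f_h^{N^*I}(\widehat\tau_h)=0$ and $f_h^{N}(\widetilde\tau_h)\le f_h^{N_*I}(\widetilde\tau_h)=0$. Since $\lambda_{h,j}^{N}$, and therefore $f_h^{N}$, depends continuously on $\tau$, the intermediate value theorem produces a root $\tau_h$ of $f_h^{N}$ in $[\widehat\tau_h,\widetilde\tau_h]$; this $\tau_h$ is the eigenvalue of (\ref{1.7}) announced in the statement and satisfies $\widehat\tau_h\le\tau_h\le\widetilde\tau_h$.

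The step I expect to be the main obstacle is justifying that $\widehat\tau_h\le\widetilde\tau_h$, so that the interval $[\widehat\tau_h,\widetilde\tau_h]$ is nonempty, and, more generally, that ``the'' eigenvalue attached to a given branch is unambiguously defined. For fixed $\bm u$ the Rayleigh quotient $\mathcal A_\tau^{N}(\bm u,\bm u)/\mathcal B(\bm u,\bm u)$ is a convex parabola in $\tau$, so the branches $\lambda_{h,j}(\tau)$ are not globally monotone and $f_h$ may a priori cross zero more than once. The remedy, as carried out in \cite{cakoni3}, is to localize to the first crossing: one shows that on the relevant branch $\tau\mapsto\lambda_{h,j}(\tau)$ is strictly decreasing in a neighbourhood of its transmission eigenvalue, so that $f_h$ meets the diagonal transversally with slope less than one. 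Strict monotonicity of $f_h^{N^*I}$ there then forces $\widehat\tau_h\le\widetilde\tau_h$ (from $f_h^{N^*I}(\widetilde\tau_h)\le f_h^{N_*I}(\widetilde\tau_h)=0=f_h^{N^*I}(\widehat\tau_h)$ and monotonicity), which pins down the root located above and completes the argument.
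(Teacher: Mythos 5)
The paper does not actually prove this lemma: it is stated as ``a direct citation of Theorem 3.3 in \cite{cakoni3}, given without proof.'' What you have written is essentially a reconstruction of the argument behind that cited theorem, transplanted to the discrete space $\mathbf{U}_h$, so there is no paper-internal proof to compare against --- but your route is the standard and correct one. The three pillars are all sound: (i) the Loewner-order monotonicity $(N^*-1)^{-1}I\le (N-I)^{-1}\le (N_*-1)^{-1}I$, combined with the observation that the only $N$-dependent part of $\mathcal A_\tau(\bm u,\bm u)$ is $\bigl((N-I)^{-1}\bm g,\bm g\bigr)$ with $\bm g=\mathrm{curl}^2\bm u-\tau\bm u$; (ii) the branchwise ordering $\lambda_{h,j}^{N^*I}(\tau)\le\lambda_{h,j}^{N}(\tau)\le\lambda_{h,j}^{N_*I}(\tau)$ via min--max (here it is cleaner to apply Courant--Fischer to the reciprocal quotient $\mathcal B(\bm u,\bm u)/\mathcal A_\tau(\bm u,\bm u)$, since $\mathcal A_\tau$ is positive definite by Lemma 3.1 while $\mathcal B$ is only semidefinite on $\mathbf U_h$ --- the gradient directions must be quotiented out, as you note parenthetically); and (iii) the sign change $f_h^{N}(\widehat\tau_h)\ge 0\ge f_h^{N}(\widetilde\tau_h)$ plus continuity of the branch, giving a root by the intermediate value theorem.

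The one genuine soft spot is the one you identify yourself: establishing $\widehat\tau_h\le\widetilde\tau_h$ and pinning down which root is ``the'' eigenvalue. Your proposed fix via \emph{local} strict monotonicity of $f_h^{N^*I}$ near $\widehat\tau_h$ does not quite close this: local monotonicity at $\widehat\tau_h$ together with $f_h^{N^*I}(\widetilde\tau_h)\le 0=f_h^{N^*I}(\widehat\tau_h)$ does not force $\widetilde\tau_h\ge\widehat\tau_h$ unless you know something global about $f_h^{N^*I}$ on $(0,\widehat\tau_h)$. The standard (and correct) repair, which is what \cite{cakoni3} actually does, is the first-crossing convention: one shows $f_h(\tau)>0$ for $\tau$ small and defines the relevant eigenvalue on each branch as the \emph{first} root; then $f_h^{N^*I}>0$ on $(0,\widehat\tau_h)$ and $f_h^{N^*I}(\widetilde\tau_h)\le f_h^{N_*I}(\widetilde\tau_h)=0$ immediately give $\widetilde\tau_h\ge\widehat\tau_h$, after which your intermediate value step produces $\tau_h\in[\widehat\tau_h,\widetilde\tau_h]$. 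With that adjustment your argument is a complete proof of the lemma, and in fact supplies more detail than the paper, which offers none.
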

With the aid of standard error analysis of FEM for quad-curl eigenvalue problem with constant coefficients, it is somewhat easier to prove $\widehat\tau_h$ and $\widetilde\tau_h$ converges to the eigenvalue $\widehat\tau$ of (\ref{1.6}) with   $N:=N^*I$ and the eigenvalue $\widetilde\tau$ with
$N:=N_*I$, respectively. Then it follows by Lemma 3.4 that $\widehat\tau\le\tau_h\le \widetilde\tau_{}$ for $h$ small enough.


%
%
%


\begin{theorem}
Let $f_h(\tau)\in C^1([a,b])$ with $a>0$ and $f(\tau)$ be two continuous functions on  $[a,b]$.
Let $\{\tau_{h_i}\}_1^{\infty}\subset[a,b]$ satisfy $f_{h_i}(\tau_{h_i})=0$ then there is $\tau^*$ such that
$ f(\tau^*)=0$ and a subsequence  $\tau_{h_i}\to\tau^*$ ($i\to\infty$). We adopt the following assumption.

{\bf Assumption A.} There is a positive constant $c$ such that $\min_{\tau\in [a,b]}| f'_{h}(\tau)|\ge c$ for $h$ small enough.

Then it holds
\begin{align}\label{s1.16}
 |\tau_{h_i}-\tau^*|\le|f_{h_i}(\tau^*)-f(\tau^*)| {\overline c},~for~i~large~enough.
\end{align}

Conversely, let the interval $[a,b]$ be such that $\tau^*\in[a,b]$ with $f(\tau^*)=0$ and   $\{\tau_{h}: f_{h}(\tau_{h})=0,\forall h<\delta\}\subset [a,b]$ for a small $\delta>0$. If Assumption A  is valid
then any sequence $\{\tau_{h_i}\}_1^{\infty}\subset[a,b]$ satisfying $f_{h_i}(\tau_{h_i})=0$ will converge to $\tau^*$  and the above (\ref{s1.16}) is valid.
\end{theorem}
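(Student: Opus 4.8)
The plan is to read the statement as an abstract root-convergence result and to supply it with the two analytic inputs already proved: pointwise convergence $f_h(\tau)\to f(\tau)$ at each fixed $\tau$ (because $f_h(\tau)-f(\tau)=\lambda_h(\tau)-\lambda(\tau)$ and Lemma 3.3 gives $|\lambda(\tau)-\lambda_h(\tau)|\lesssim h^{2(s-1)}$), and the moving-point convergence $\lambda_h(\tau_h)\to\lambda(\tau^*)$ whenever $\tau_h\to\tau^*$ (Lemma 3.4).

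For the first assertion I would argue by compactness. Since $\{\tau_{h_i}\}\subset[a,b]$ and $[a,b]$ is compact, Bolzano--Weierstrass yields a subsequence (not relabelled) with $\tau_{h_i}\to\tau^*\in[a,b]$. To see $f(\tau^*)=0$, I would split
\[
f(\tau^*)=\big(f(\tau^*)-f(\tau_{h_i})\big)+\big(f(\tau_{h_i})-f_{h_i}(\tau_{h_i})\big)+f_{h_i}(\tau_{h_i}).
\]
The last term vanishes by hypothesis; the first tends to $0$ by continuity of $f$; and the middle term equals $\lambda(\tau_{h_i})-\lambda_{h_i}(\tau_{h_i})$, which tends to $0$ by Lemma 3.4. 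Hence $f(\tau^*)=0$.

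Under Assumption A the error bound follows from the mean value theorem, as the introduction anticipates: for $i$ large there is $\xi_i$ between $\tau^*$ and $\tau_{h_i}$ with $f_{h_i}(\tau_{h_i})-f_{h_i}(\tau^*)=f'_{h_i}(\xi_i)(\tau_{h_i}-\tau^*)$. Using $f_{h_i}(\tau_{h_i})=0$, the bound $|f'_{h_i}(\xi_i)|\ge c$, and $f(\tau^*)=0$ (so that $f_{h_i}(\tau^*)=f_{h_i}(\tau^*)-f(\tau^*)$), I obtain $|\tau_{h_i}-\tau^*|\le|f_{h_i}(\tau^*)-f(\tau^*)|/c$, which is \eqref{s1.16} with $\overline c=1/c$.

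For the converse the decisive preliminary is that Assumption A forces strict monotonicity of $f_h$: $f'_h$ is continuous and never zero on $[a,b]$, hence of one sign, so $|f_h(\tau_1)-f_h(\tau_2)|\ge c|\tau_1-\tau_2|$ on $[a,b]$ and in particular the discrete root is unique. The step I expect to be the real obstacle is uniqueness of the root of $f$ in $[a,b]$, since Assumption A only controls $f_h$ and not $f$; I would recover it by passing the lower Lipschitz bound to the limit, namely if $f(\tau_1^*)=f(\tau_2^*)=0$ then $c|\tau_1^*-\tau_2^*|\le|f_h(\tau_1^*)-f_h(\tau_2^*)|\to0$ by pointwise convergence, forcing $\tau_1^*=\tau_2^*$. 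With $\tau^*$ thus the unique root, any sequence $\{\tau_{h_i}\}$ of discrete roots that failed to converge to $\tau^*$ would possess a subsequence staying at distance $\ge\epsilon_0$ from $\tau^*$; by the compactness argument of the first part this subsequence has a further subsequence tending to a root of $f$ other than $\tau^*$, contradicting uniqueness. Hence $\tau_{h_i}\to\tau^*$, and \eqref{s1.16} again follows from the mean value theorem step. The only care needed throughout is to invoke Assumption A and the convergence lemmas only for $h$ small enough, i.e.\ for $i$ large.
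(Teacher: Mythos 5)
Your proposal is correct, and its first half (compactness to extract a convergent subsequence, the three-term splitting of $f(\tau^*)$ handled by continuity of $f$ and Lemma 3.4, then the Lagrange mean value theorem with $f_{h_i}(\tau_{h_i})=0=f(\tau^*)$ and Assumption A) is essentially the paper's own argument. Where you genuinely diverge is the converse. The paper argues directly by contradiction: assuming a subsequence stays at distance $\ge\varepsilon$ from $\tau^*$, it rewrites the mean value identity as $f(\tau^*)-f_{h_i}(\tau^*)=f'_{h_i}(\psi)(\tau_{h_i}-\tau^*)$, notes that the left side tends to $0$ by Lemma 3.3 applied at the single fixed point $\tau^*$, while the right side is bounded below in modulus by $c\varepsilon$. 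You instead upgrade Assumption A to a uniform lower Lipschitz bound $|f_h(\tau_1)-f_h(\tau_2)|\ge c|\tau_1-\tau_2|$ (valid since $f_h'$ is continuous and nonvanishing, hence of one sign on $[a,b]$), pass it to the limit to obtain uniqueness of the root of $f$ in $[a,b]$, and then rerun the compactness argument on any putatively non-convergent subsequence. Both routes are sound; the paper's is shorter and needs the convergence $f_{h}(\tau)\to f(\tau)$ only at the one point $\tau^*$, whereas your uniqueness step invokes pointwise convergence at arbitrary pairs of roots of $f$ (still available from Lemma 3.3, but a formally stronger input whose hypotheses on $M(\lambda(\tau))$ must then be assumed at each such root). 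In exchange, your argument isolates the structural reason the whole sequence must converge --- the limit root is unique --- which the paper leaves implicit, and it also records the (true, if unneeded) observation that each $f_h$ has at most one root in $[a,b]$.
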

\begin{proof}
Let $\{\tau_{h_i}\}_1^{\infty}\subset[a,b]$ satisfy $f_{h_i}(\tau_{h_i})=0$.
Note that the sequence $\{\tau_{h_i}\}_1^{\infty}\subset [a,b]$ does  have a cluster point, i.e., there is a subsequence, still denoted by itself, 
converging to $\tau^*\in [a,b]$. In virtue of Lemma 3.4
we have
\begin{align*}
  f(\tau_{h_i})\rightarrow 0,~i\rightarrow\infty.
\end{align*}
 Hence due to the continuity of $f$ we have
\begin{align*}
  f(\tau^*)= 0.
\end{align*}
Let Assumption A hold true.
Using Lagrange mean value theorem we have
\begin{align*}
  f_{h_i}(\tau^*)-f_{h_i}(\tau_{h_i})= f'_{h_i}(\psi)(\tau^*-\tau_{h_i}), ~ \psi~is~between~\tau_{h_i}~and~
  \tau^*
\end{align*}
that is
\begin{align}\label{t}
  f_{h_i}(\tau^*)-f(\tau^{*})=  f'_{h_i}(\psi)(\tau^*-\tau_{h_i}).
\end{align}
Then (\ref{s1.16})  follows.

Conversely, let $[a,b]$ be such that $\tau^*\in[a,b]$ with $f(\tau^*)=0$ and any sequence $\{\tau_{h_i}\}$ with $h_i<\delta$   falls in $[a,b]$. Let the modified Assumption A be valid. We give the proof  by contradiction. Suppose  there is a $\varepsilon>0$  so that for any fixed positive $\widetilde\delta<\delta$  if $h_i<\widetilde\delta$ then $|\tau_{h_i}-\tau^*|>\varepsilon$.
We modify the expansion (\ref{t}) as
\begin{align}\label{t1}
  f(\tau^{*})-f_{h_i}(\tau^*)= f'_{h_i}(\psi)(\tau_{h_i}-\tau^*).
\end{align}
This leads to a contraction by letting $i\rightarrow\infty$ and using Lemma 3.3.
 Hence $\tau_{h_i}\rightarrow\tau^*$ ($i\rightarrow\infty$). Then (\ref{s1.16}) follows by using (\ref{t}) again.

\end{proof}

\begin{rem} In Theorem 3.1 the condition $\min_{\tau\in[a,b]}|f'_{h}(\tau)|\ge c$ ($h$ small enough) can be reduced into $|f'_{h}(\tau^*)|$ $\ge c$ ($h$ small enough).
We state the practicality of the conditions given in above theorem. The assumption $\{\tau_{h_i}\}_i^{\infty}\subset[a,b]$ is usually satisfied in the case that $\{\tau_{h_i}\}_i^{\infty}$ converges to a point $\tau^*$.  Assumption A can be verified
  when $f^{}_{h_i}(\tau)$ is a strictly monotonic function sequence   in a small neighbourhood $[a,b]$ of  $\tau_{h_i}$.
  In addition, we can prove  $f'_{h_i}(\tau^*)\to f'_{}(\tau^*)$ ($i\rightarrow\infty$). In fact, differenating on both sides of (\ref{1.9})
  \begin{align}\label{3.201}
 \mathcal A'_{\tau^*}(\bm u_h(\tau^*),\bm v_h)+\mathcal A_{\tau^*}(\bm u'_h(\tau^*),\bm v_h)=\lambda'_h(\tau^*) \mathcal B(\bm u_h(\tau^*),\bm v_h)+\lambda_h(\tau^*) \mathcal B(\bm u'_h(\tau^*),\bm v_h),~\forall \bm v _h \in\mathbf{U}_h.
\end{align}
Taking $\bm v_h:=\bm u_h(\tau^*)$ with $\mathcal B(\bm u_h(\tau^*),\bm u_h(\tau^*))=1$ we have
 \begin{align}\label{3.221}
   \lambda'_h(\tau^*)&=\mathcal A'_{\tau^*}(\bm u_h(\tau^*),\bm u_h(\tau^*))\nonumber\\
   &=-\left((N-I)^{-1}\mathrm{curl}^2 \bm u_h(\tau^*), \bm u_h(\tau^*)
\right)-\left((N-I)^{-1} \bm u_h,\mathrm{curl}^2 \bm u_h(\tau^*)
\right)\nonumber\\
&~~~~+2\tau^*(N(N-I)^{-1}\bm u_h(\tau^*),\bm u_h(\tau^*)).
 \end{align}
Similarly we have
 \begin{align}\label{3.222t}
   \lambda'(\tau^*)&=-\left((N-I)^{-1}\mathrm{curl}^2 \bm u(\tau^*), \bm u(\tau)
\right)-\left((N-I)^{-1} \bm u(\tau^*),\mathrm{curl}^2 \bm u(\tau^*)
\right)\nonumber\\
&~~~~+2\tau^*(N(N-I)^{-1}\bm u(\tau^*),\bm u(\tau^*))
 \end{align}
 with the eigenfunction satisfying $\mathcal B(\bm u,\bm u)=1$.
So the assertion is true due to (\ref{2.34t}).
Hence   $\lambda'(\tau^*)\ne1$ implies $|f'_{h_i}(\tau^*)|$ $\ge c$ ($i$ small enough).
\end{rem}

\begin{theorem}
Assume $\tau_h\to\tau^*\in(a,b)$ with $a>0$.  Let $\lambda_h(\tau),\lambda(\tau)\in C^1([a,b])$.   Then
\begin{eqnarray}
\label{2.36s}
&&|\lambda'_{h}(\tau_h)-\lambda'(\tau^*)|+|\lambda'_{h}(\tau^*)-\lambda'(\tau^*)|\to0.
\end{eqnarray}
\end{theorem}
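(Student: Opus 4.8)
The plan is to reduce the convergence of the derivatives to the convergence of the corresponding normalized eigenfunctions, by exploiting the closed-form representation of $\lambda_h'$ and $\lambda'$ obtained from differentiating the generalized eigenvalue relation (\ref{1.9}). Repeating the differentiation argument of Remark 3.1 at an arbitrary parameter value, for every $\tau\in[a,b]$ one has $\lambda_h'(\tau)=g(\bm u_h(\tau),\tau)$ and $\lambda'(\tau)=g(\bm u(\tau),\tau)$, where
\[
g(\bm u,\tau):=-\left((N-I)^{-1}\mathrm{curl}^2\bm u,\bm u\right)-\left((N-I)^{-1}\bm u,\mathrm{curl}^2\bm u\right)+2\tau\left(N(N-I)^{-1}\bm u,\bm u\right),
\]
and the eigenfunctions are normalized by $\mathcal B(\bm u_h(\tau),\bm u_h(\tau))=\mathcal B(\bm u(\tau),\bm u(\tau))=1$. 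Since $(N-I)^{-1}\in\b W^{1,\infty}(D)$ and $N$ is bounded, the functional $g$ is continuous, and for $\bm u,\bm w$ ranging over a bounded set of $\mathbf{H}_0(\mathrm{curl}^2,D)$ and $\tau,\tau'\in[a,b]$ the Cauchy--Schwarz inequality gives
\[
|g(\bm u,\tau)-g(\bm w,\tau')|\lesssim \|\bm u-\bm w\|_{2,\mathrm{curl}}+|\tau-\tau'|.
\]

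The second term of (\ref{2.36s}) is then immediate: by the eigenfunction estimate (\ref{2.34t}) the normalized discrete eigenfunction $\bm u_h(\tau^*)$ converges to $\bm u(\tau^*)$ in the $\mathbf{H}(\mathrm{curl}^2)$-norm, so the Lipschitz bound above yields
\[
|\lambda_h'(\tau^*)-\lambda'(\tau^*)|=|g(\bm u_h(\tau^*),\tau^*)-g(\bm u(\tau^*),\tau^*)|\lesssim\|\bm u_h(\tau^*)-\bm u(\tau^*)\|_{2,\mathrm{curl}}\to0,
\]
which is exactly the computation indicated at the end of Remark 3.1.

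For the first term I would first establish the eigenfunction analogue of Lemma 3.4, namely that the normalized eigenfunction $\bm u_h(\tau_h)$ of (\ref{1.9}) at the moving parameter $\tau_h$ converges to $\bm u(\tau^*)$ in $\mathbf{H}(\mathrm{curl}^2)$. Observing that $\bm u_h(\tau_h)$ is, up to the reciprocal eigenvalue, an eigenfunction of the auxiliary operator $T_{\tau_h,h}$ while $\bm u(\tau^*)$ is one of $T_{\tau^*}$, this convergence follows from the uniform operator convergence $\|T_{\tau_h,h}-T_{\tau^*}\|_{2,\mathrm{curl}}\to0$ established in (\ref{3.14s}), together with the spectral approximation theory of \cite{babuska,chatelin} applied to the associated spectral projections. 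With this in hand, I split
\[
|\lambda_h'(\tau_h)-\lambda'(\tau^*)|=|g(\bm u_h(\tau_h),\tau_h)-g(\bm u(\tau^*),\tau^*)|\lesssim\|\bm u_h(\tau_h)-\bm u(\tau^*)\|_{2,\mathrm{curl}}+|\tau_h-\tau^*|,
\]
and both contributions tend to zero by the eigenfunction convergence just described and the hypothesis $\tau_h\to\tau^*$. Summing the two estimates yields (\ref{2.36s}).

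The main obstacle is precisely this eigenfunction convergence at the moving parameter $\bm u_h(\tau_h)\to\bm u(\tau^*)$: in contrast to the eigenvalue statement of Lemma 3.4, it requires transferring the uniform operator convergence (\ref{3.14s}) to convergence of eigenfunctions and, when $\lambda(\tau^*)$ is a multiple eigenvalue, selecting representatives inside the limiting eigenspace $M(\lambda(\tau^*))$ that are consistently $\mathcal B$-normalized, so that the gap between discrete and continuous eigenspaces controls $\|\bm u_h(\tau_h)-\bm u(\tau^*)\|_{2,\mathrm{curl}}$. This is also where simplicity of the eigenvalue (or a clean analytic-perturbation structure) is needed to make the derivative representation $\lambda_h'=g(\bm u_h,\cdot)$ unambiguous.
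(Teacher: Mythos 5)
Your proof is correct in substance but organizes the argument differently from the paper. Both proofs start from the same closed-form representations (\ref{3.221}) and (\ref{3.222t}) of $\lambda_h'$ and $\lambda'$ in terms of the $\mathcal B$-normalized eigenfunctions (your functional $g$, which is quadratic in $\bm u$ and hence insensitive to the sign ambiguity). For the term $|\lambda'_h(\tau_h)-\lambda'(\tau^*)|$, however, the paper inserts $\lambda'(\tau_h)$ as the intermediate quantity: it asserts the \emph{uniform} convergence $\max_{\tau\in[a,b]}|\lambda'_h(\tau)-\lambda'(\tau)|\to 0$ (deduced from the derivative formulas, which tacitly requires the eigenfunction error estimate to hold uniformly in $\tau$ over $[a,b]$) and then uses the uniform continuity of $\lambda'$ supplied by the $C^1([a,b])$ hypothesis to control $|\lambda'(\tau_h)-\lambda'(\tau^*)|$. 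You instead compare $g(\bm u_h(\tau_h),\tau_h)$ with $g(\bm u(\tau^*),\tau^*)$ directly via the Lipschitz bound for $g$ on bounded sets, which reduces everything to the eigenfunction convergence at the moving parameter, $\|\bm u_h(\tau_h)-\bm u(\tau^*)\|_{2,\mathrm{curl}}\to 0$; this is exactly the estimate the paper establishes later in the proof of Theorem 3.3 from the uniform operator convergence (\ref{3.14s}) together with Babuska--Osborn theory, so your route is legitimate. What each buys: the paper's route never needs the moving-parameter eigenfunction convergence but does need uniformity of the discrete-to-continuous derivative convergence over the whole interval (stated without detail); your route avoids that interval-wide uniformity and does not use the uniform continuity of $\lambda'$ at all, at the price of front-loading the spectral-projection argument that the paper defers to Theorem 3.3. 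The normalization/multiplicity caveat you raise at the end is real, but it afflicts the paper's version equally and is effectively assumed away by the $C^1$ hypothesis on $\lambda(\tau)$, so it is not a gap specific to your argument.
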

\begin{proof}$\lambda_{}(\tau)\in C^1([a,b])$ implies that  $\lambda'_{}(\tau)$ has uniform continuity on $[a,b]$.
For any fixed $\varepsilon>0$, there exists $\delta(\varepsilon)>0$ such that if $|\Delta\tau|<\delta(\varepsilon)$ then
\begin{align}\label{3.121}
  |\lambda'_{}(\tau+\Delta\tau)-\lambda'_{}(\tau)|<\varepsilon/2.
\end{align}
We deduce from (\ref{3.221}) and (\ref{3.222t})
\begin{align}
 \max_{\tau\in [a,b]} |\lambda'_{h}(\tau)-\lambda'_{}(\tau)|\to0.
\end{align}
Hence we know for $h<h_0(\varepsilon,a,b)$ it holds for $\tau\in[a,b]$
\begin{align}\label{3.121t}
  |\lambda'_{h}(\tau)-\lambda'(\tau)|<\varepsilon/2.
\end{align}
For $h<h_1(\delta,a,b)$ small enough it holds $\tau_h\in(a,b)$ and $|\tau_h-\tau^*|<\delta(\varepsilon)$.  Then
  we deduce  from (\ref{3.121}) and (\ref{3.121t}) that for $h<\min(h_0,h_1)$
\begin{align}\label{3.122}
  |\lambda'_{h}(\tau_h)-\lambda'_{}(\tau^*)|\le   |\lambda'_{h}(\tau_h)-\lambda'_{}(\tau_h)|+|\lambda'_{}(\tau_h)-\lambda'(\tau^*)|<\varepsilon.
\end{align}
Namely $|\lambda'_{h}(\tau_h)-\lambda'_{h}(\tau^*)|\to0$ ($h\to0$).
\end{proof}

\begin{rem}
The above theorem implies that if $\tau_h\to\tau^*$ then $f'_h(\tau_h)=\lambda'_{h}(\tau_h)-1$ can be regarded as  an indicator to detect $|f_h'(\tau^*)|=|\lambda'_{h}(\tau^*)-1|$ greater than 0 strictly in Remark 1. Since $\lambda'_{h}(\tau_h)$ is readily computed by (\ref{3.221}), this is more  convenient in practical computation.
\end{rem}

\begin{theorem} Under the assumptions in Theorem 3.1,
    let $\tau_h=\lambda_h(\tau_h)$ be an eigenvalue of the  problem (\ref{1.7}) that converges to $\tau^*=\lambda(\tau^*)$.  Let the eigenfunction space $M(\lambda(\tau^*))$ satisfy $M(\lambda(\tau^*))\subset \b H^{s-1}(D)$, $\mathrm{curl}\left(M(\lambda(\tau^*))\right)\subset \b H^{s}(D)$ with $l+1\ge s>1$.
Let $\bm u_{h}(\tau_h)$ be the corresponding  eigenfunction of the problem (\ref{1.7})
  and $\|\bm u_{h}(\tau_h)\|_{2,\mathrm{curl}}=1$, then there exists an eigenfunction
$\bm u(\tau^*) $ and $s_0>1$ such that

\begin{eqnarray}
&&|\tau^*-\tau_h|\lesssim h^{2(s-1)},\label{1.25}\\\label{2.34}
&&\|\bm u_{h}(\tau_h)-\bm u_{}(\tau^*)\|_{2,\mathrm{curl}}\lesssim
h^{s-1}, \\\label{2.35}
&&\|\mathrm{curl}(\bm u_{h}(\tau_h)-\bm u_{}(\tau^*))\|_{}\lesssim
h^{s-2+s_0}.
\end{eqnarray}
\end{theorem}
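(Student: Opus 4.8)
My plan is to establish the three bounds in the order \eqref{1.25}, \eqref{2.34}, and finally the curl-seminorm estimate, reusing the operator machinery assembled above and reserving a duality argument for the last. For the eigenvalue bound \eqref{1.25} I would apply Theorem 3.1 verbatim. Since $\tau^{*}=\lambda(\tau^{*})$ we have $f(\tau^{*})=0$, so that $f_{h}(\tau^{*})-f(\tau^{*})=\lambda_{h}(\tau^{*})-\tau^{*}=\lambda_{h}(\tau^{*})-\lambda(\tau^{*})$. The a priori bound \eqref{2.35ss} of Lemma 3.3, read at the fixed parameter $\tau=\tau^{*}$, gives $|\lambda(\tau^{*})-\lambda_{h}(\tau^{*})|\lesssim h^{2(s-1)}$, and the stability inequality \eqref{s1.16} of Theorem 3.1 (valid under Assumption A) then delivers $|\tau_{h}-\tau^{*}|\lesssim h^{2(s-1)}$.

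For the energy-norm estimate \eqref{2.34} I would pass to the solution operators. Exactly as in \eqref{1.10s}--\eqref{3.2} one checks that $\bm u_{h}(\tau_{h})$ is an eigenfunction of $T_{\tau_{h},h}$ for the eigenvalue $1/\tau_{h}$, while $\bm u(\tau^{*})$ is an eigenfunction of $T_{\tau^{*}}$ for $1/\tau^{*}$. Because $T_{\tau^{*}}$ is compact (Lemma 3.2) and $T_{\tau_{h},h}\to T_{\tau^{*}}$ in operator norm by \eqref{3.14s}, the Babuska-Osborn theory \cite{babuska} applies and bounds the eigenfunction error by $\|(T_{\tau^{*}}-T_{\tau_{h},h})|_{M(\lambda(\tau^{*}))}\|_{2,\mathrm{curl}}$. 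Splitting $T_{\tau^{*}}-T_{\tau_{h},h}=(T_{\tau^{*}}-T_{\tau^{*},h})+(T_{\tau^{*},h}-T_{\tau_{h},h})$ and restricting to the smooth eigenspace $M(\lambda(\tau^{*}))$, the first part is a Ritz-projection error that the interpolation estimates of Lemma 2.1 control by $h^{s-1}$, while the second is at most $\|T_{\tau^{*},h}-T_{\tau_{h},h}\|_{2,\mathrm{curl}}\lesssim|\tau_{h}-\tau^{*}|\lesssim h^{2(s-1)}$ by \eqref{1.22}; since $s>1$ this yields \eqref{2.34} after the harmless renormalization to $\|\bm u_{h}(\tau_{h})\|_{2,\mathrm{curl}}=1$.

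The crux is the curl-seminorm estimate, which I would obtain by Aubin-Nitsche duality. The decisive point is that $\bm u_{h}(\tau_{h})-\bm u(\tau^{*})$ does not itself enjoy Galerkin orthogonality, so I would first peel off the genuine Ritz-projection error: up to renormalization and terms of size $|\tau_{h}-\tau^{*}|\lesssim h^{2(s-1)}$ and $\|T_{\tau^{*},h}-T_{\tau_{h},h}\|_{2,\mathrm{curl}}\lesssim h^{2(s-1)}$, the dominant contribution is $\bm\eta:=(I-P_{\tau^{*},h})\bm u(\tau^{*})$, which satisfies $\mathcal A_{\tau^{*}}(\bm\eta,\bm v_{h})=0$ for all $\bm v_{h}\in\mathbf U_{h}$ and $\|\bm\eta\|_{2,\mathrm{curl}}\lesssim h^{s-1}$. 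Introducing $\bm\psi\in\mathbf H_{0}(\mathrm{curl}^{2},D)$ by $\mathcal A_{\tau^{*}}(\bm v,\bm\psi)=\mathcal B(\bm v,\bm\eta)$ for all $\bm v$ — well posed by Lemma 3.1 — and testing with $\bm v=\bm\eta$ gives $\|\mathrm{curl}\,\bm\eta\|^{2}=\mathcal A_{\tau^{*}}(\bm\eta,\bm\psi)=\mathcal A_{\tau^{*}}(\bm\eta,\bm\psi-\bm\psi_{h})$ for any $\bm\psi_{h}\in\mathbf U_{h}$, by the orthogonality of $\bm\eta$. If the dual solution has the regularity $\mathrm{curl}\,\bm\psi\in\mathbf H^{s_{0}}(D)$ for some $s_{0}>1$ with $\|\bm\psi\|_{s_{0}-1}+\|\mathrm{curl}\,\bm\psi\|_{s_{0}}\lesssim\|\mathrm{curl}\,\bm\eta\|$, then Lemma 2.1 gives $\|\bm\psi-\bm\psi_{h}\|_{2,\mathrm{curl}}\lesssim h^{s_{0}-1}\|\mathrm{curl}\,\bm\eta\|$, and Cauchy-Schwarz together with $\|\bm\eta\|_{2,\mathrm{curl}}\lesssim h^{s-1}$ yields $\|\mathrm{curl}\,\bm\eta\|\lesssim h^{s-2+s_{0}}$.

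The part I expect to be genuinely delicate is the dual regularity: securing an index $s_{0}>1$ with $\mathrm{curl}\,\bm\psi\in\mathbf H^{s_{0}}$ for the quad-curl-type dual problem on a general polygon or polyhedron is exactly where the extra $\mathbf H(\mathrm{div})$-regularity of the range of $T_{\tau^{*}}$ (already used in \eqref{1.12s} and in the compactness proof of Lemma 3.2) must be exploited, and on domains with re-entrant corners such an $s_{0}>1$ may have to be imposed as an explicit hypothesis. A secondary, purely bookkeeping point is that the parameter-mismatch terms enter at order $h^{2(s-1)}$, so one must take $s_{0}\le s$ for the advertised rate $h^{s-2+s_{0}}$ to be the operative one; collecting the three pieces then gives \eqref{1.25}, \eqref{2.34} and the curl-seminorm bound.
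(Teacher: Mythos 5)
Your proposal is correct and follows essentially the same route as the paper: the eigenvalue bound comes from combining the fixed-parameter estimate $|\lambda(\tau^*)-\lambda_h(\tau^*)|\lesssim h^{2(s-1)}$ of Lemma 3.3 with the stability inequality of Theorem 3.1; the $\|\cdot\|_{2,\mathrm{curl}}$ bound comes from the splitting $T_{\tau^*}-T_{\tau^*,h}+T_{\tau^*,h}-T_{\tau_h,h}$; and the curl-seminorm bound comes from an Aubin--Nitsche duality with the dual problem $\mathcal A_{\tau^*}(\cdot,\bm\Phi)=\mathcal B\bigl((T_{\tau^*}-T_{\tau^*,h})\bm f,\cdot\bigr)$ under exactly the regularity assumption $\|\mathrm{curl}\bm\Phi\|_{s_0}+\|\bm\Phi\|_{s_0-1}\lesssim\|\mathrm{curl}(T_{\tau^*}-T_{\tau^*,h})\bm f\|$ that you posit. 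The only differences are presentational: the paper runs the duality at the operator level, bounding $\|(T_{\tau^*}-T_{\tau^*,h})|_{M(\lambda)}\|$ in $\dot{\b H}_0(\mathrm{curl})$ and then invoking Theorem 7.4 of Babuska--Osborn, rather than working directly with the Ritz-projection error of the eigenfunction, and it goes on to verify the dual regularity assumption (for $N=nI$ on two-dimensional domains, via the divergence constraint and a reduction to a Poisson problem for $\mathrm{curl}\bm\Phi$), which you correctly identify as the delicate point but leave as a hypothesis.
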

\begin{proof}The combination of (\ref{2.35ss}) and (\ref{s1.16})   gives
(\ref{1.25}).
According to  the standard spectral approximation theory in \cite{babuska}, using (\ref{3.14s}), (\ref{1.22}) and the argument in (\ref{1.16}) we have
\begin{align*}
\|\bm u(\tau^*)-\bm u_{h}(\tau_h)\|_{2,\mathrm{curl}}&\lesssim \|(T_{\tau^*}-T_{\tau^*,h}+T_{\tau^*,h}-T_{\tau_h,h})\bm u(\tau^*)\|_{2,\mathrm{curl}}\\
&\lesssim \|(P_{\tau^*,h}-I)\bm u(\tau^*)\|_{2,\mathrm{curl}}+\|(T_{\tau^*,h}-T_{\tau_h,h})\bm u(\tau^*)\|_{2,\mathrm{curl}}\\
&\lesssim  h^{s-1}+|\tau^*-\tau_h|.
\end{align*}
This together with (\ref{1.25})   yields the assertion (\ref{2.34}).
Introduce the auxiliary problem:
Find $\bm\Phi\in \mathbf{H}_0(\mathrm{curl}^2,D)$ such that
\begin{align}\label{2.14ss}
\mathcal A_{\tau^*}(\bm\Phi, \bm v)=\mathcal B\left((T_{\tau^*}-T_{\tau^*,h})\bm f, \bm v\right),~~~\forall
\bm v\in \mathbf{H}_0(\mathrm{curl}^2,D).
\end{align}
We adopt the following a-priori regularity assumption(see (5.7) in \cite{zhang} and Remark 3.5 in \cite{brenner1})
\begin{eqnarray}\label{as}
\|\mathrm{curl}{\bm\Phi}\|_{s_0}+\|\bm\Phi\|_{s_0-1}
\lesssim \|\mathrm{curl}(T_{\tau^*}-T_{\tau^*,h})\bm f\|_{}
\end{eqnarray}
with $s_0>3/2$.
We shall verify this assumption under $N=nI$  with   $n,(n-1)^{-1}\in W^{1,\infty}(D)$, $n(\bm x)>1$ $\forall \bm x\in D$    and the two-dimensional   domain $D$.
Taking $\bm v=\nabla p$ in (\ref{2.14ss}) for any  $p\in H^1_0(D)$, we have
\begin{eqnarray}\label{1.13sss}
     \tau^{*2}\left(n(n-1)^{-1}\bm\Phi,\nabla p\right) - \tau^*\left( (n-1)^{-1}  \mathrm{curl}^2 \bm\Phi, \nabla p\right)  =0.
\end{eqnarray}
It follows that
\begin{align}\label{1.13}
\|\mathrm{div}\left(\frac{n}{n-1}\bm\Phi\right)\|\lesssim  \|\bm\Phi\|_{2,\mathrm{curl}}\lesssim \|\mathrm{curl}(T_{\tau^*}-T_{\tau^*,h})\bm f\|_{},~~\forall \bm f\in\b H_0(\mathrm{curl},D).
\end{align}
Hence $\bm\Phi\in \b H_0(\mathrm{curl}^2,D)\cap\{\bm v:\mathrm{div}\left(\frac{n}{n-1}\bm v\right)\in L^2(D)\}\subset \b H^{s_0-1}(D)$ for some $s_0>3/2$.
 Let
 $y:=\mathrm{curl}\bm\Phi\in H^1_0(D) $ and $\bm g:=(T_{\tau^*}-T_{\tau^*,h})\bm f\in\mathbf{H}(\mathrm{curl},D)$. 
Note that
$$\mathrm{curl}^2((n-1)^{-1}\mathrm{curl}^2\bm\Phi)=\tau ^* (n-1)^{-1}\mathrm{curl}^2\bm\Phi+\tau ^*\mathrm{curl}^2 ((n-1)^{-1}\bm\Phi)-(\tau^*)^2 \bm \Phi+\mathrm{curl}^2\bm g.$$
i.e.,
$$\mathrm{curl}^2((n-1)^{-1}\mathrm{curl} y)=\tau^* n(n-1)^{-1}\mathrm{curl}^2 \bm\Phi+\tau ^*\mathrm{curl}^2 ((n-1)^{-1}\bm\Phi)-(\tau^*)^2 \bm\Phi+\mathrm{curl}^2\bm g.$$
Since $$\mathrm{curl}((n-1)^{-1}\mathrm{curl} y)=-(n-1)^{-1}\Delta  y-\nabla ((n-1)^{-1})\cdot\nabla  y$$ we have
\begin{align*}
-\mathrm{curl}((n-1)^{-1}\Delta y)&=\tau^* (n-1)^{-1}\mathrm{curl}^2\bm\Phi+\tau ^*\mathrm{curl}^2 ((n-1)^{-1}\bm\Phi)-(\tau^*)^2 \bm \Phi\\
&~~~~+\mathrm{curl}(\nabla ((n-1)^{-1})\cdot\nabla  y)+\mathrm{curl}^2\bm g.
\end{align*}
This implies $\nabla((n-1)^{-1}\Delta  y)\in \b H^{-1}(D)$. It is obvious that $(n-1)^{-1}\Delta   y\in  H^{-1}(D)$.
Thanks to Lemma 3.2 in \cite{brenner1} we have  $(n-1)^{-1}\Delta y\in L^2(D)$. 
Due to the regularity estimate of Possion equation, there is a number greater than 3/2, still denoted by $s_0$, such that  $\| y\|_{s_0}\lesssim \|\Delta y\|$. Then  $y\in H^{s_0}(D)$ and we conclude the assertion (\ref{as}).
Let $\bm\Phi_h$ be the finite element interpolation approximation of $\bm\Phi$. For any $\bm f\in \b L^2(D)$ we have from (\ref{2.14ss}) and the interpolation error in Lemma 2.1
\begin{align}
\|\mathrm{curl}(T_{\tau^*}-T_{\tau^*,h})\bm f\|_{}^2&=\mathcal A_{\tau^*}((T_{\tau^*}-T_{\tau^*,h})\bm f,\bm\Phi)\nonumber\\
&=\mathcal A_{\tau^*}((T_{\tau^*}-T_{\tau^*,h})\bm f,\bm\Phi-\bm\Phi_h)\nonumber\\
&\lesssim \|(T_{\tau^*}-T_{\tau^*,h})\bm f\|_{2,\mathrm{curl}}h^{s_0-1}(\|\mathrm{curl}\bm\Phi\|_{s_0}+\|\bm\Phi\|_{s_0-1})\nonumber\\
&\lesssim h^{s_0-1}\|(T_{\tau^*}-T_{\tau^*,h})\bm f\|_{2,\mathrm{curl}}\|\mathrm{curl}(T_{\tau^*}-T_{\tau^*,h})\bm f\|_{}.\label{1.29s}
\end{align}
This implies
$$\|T_{\tau^*}-T_{\tau^*,h}\|_{\dot{\b H}_0(\mathrm{curl})}\to0$$
and
\begin{align}\label{s1}
  \|(T_{\tau^*}-T_{\tau^*,h})|_{M(\lambda)}\|_{\dot{\b H}_0(\mathrm{curl})}\lesssim h^{s_0+s-2}
\end{align}
where $\dot{\b H}_0(\mathrm{curl},D):={\b H}_0(\mathrm{curl},D)/\ker(\mathrm{curl})$.
The similar argument as in (\ref{1.22}) leads to
\begin{align}
\|T_{\tau_h,h}-T_{\tau^*,h}\|_{\dot{\b H}_0(\mathrm{curl})}\lesssim  |\tau_h-\tau^*|\rightarrow0.
\end{align}
Using  Theorem 7.4
in \cite{babuska}, we deduce from the two estimates above
\begin{align*}
\|\mathrm{curl}(\bm u(\tau^*)-\bm u_{h}(\tau_h))\|_{}
 \lesssim\|(T_{\tau_h,h}-T_{\tau^*})\mid_{M(\lambda)}\|_{\dot{\b H}_0(\mathrm{curl})}\lesssim h^{s+s_0-2}+|\tau_h-\tau^*|.
\end{align*}
Using (\ref{1.25}), this gives (\ref{2.35}).
\end{proof}

\section{Error estimates for complex eigenvalues}
In  this section we assume the eigenvalue $\tau:=\tau_1+\tau_2i$ in (\ref{1.6}) be a complex number with $\tau_2\ne0$, $|\tau_1|>(\sqrt2-1)|\tau_2|$ and $N=nI$.
Given a positive number $\eta$ to be determined, we rewrite (\ref{1.6}) as
\begin{align}\label{3.1}
\widetilde{\mathcal A}_\tau(\bm u,\bm v):= {\mathcal A}_\tau(\bm u,\bm v)+\eta{\mathcal B}(\bm u,\bm v)=(\eta+\tau) {\mathcal B}(\bm u,\bm v),~\forall \bm v \in\mathbf{H}_0(\mathrm{curl}^2,D).
\end{align}

\begin{lem}Assume that $|\tau_1|>(\sqrt2-1)|\tau_2|$. For $\eta$ large enough,
$\widetilde{\mathcal A}_\tau$ is a coercive sesquilinear form on $\mathbf{H}_0(\mathrm{curl}^2,D)$.
\end{lem}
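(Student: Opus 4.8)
The plan is to test (\ref{3.1}) against $\bm v=\bm u$ and reduce the diagonal value to a \emph{real} quadratic in $\tau$ evaluated at the complex point $\tau$, so that coercivity becomes a statement about the location of $\tau^2$ in the plane. Expanding and using that $(N-I)^{-1}$ is real and symmetric, the two mixed terms carry the same factor $\tau$ (not $\tau$ and $\overline\tau$), so their imaginary parts cancel and one gets
$$\widetilde{\mathcal A}_\tau(\bm u,\bm u)=F\tau^2-2R\tau+\big(A+\eta\|\mathrm{curl}\bm u\|^2\big),$$
where $A=((N-I)^{-1}\mathrm{curl}^2\bm u,\mathrm{curl}^2\bm u)\ge0$, $C=((N-I)^{-1}\bm u,\bm u)\ge0$, $F=C+\|\bm u\|^2\ge0$ and $R=\mathrm{Re}\big((N-I)^{-1}\mathrm{curl}^2\bm u,\bm u\big)$ are all real, with $|R|\le\sqrt{AC}$. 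The decisive extra ingredient supplied by $N=nI$ is the integration-by-parts identity for $\bm u\in\b H_0(\mathrm{curl}^2,D)$, namely $(\mathrm{curl}^2\bm u,\bm u)=\|\mathrm{curl}\bm u\|^2$ (the boundary term drops because $\nu\times\bm u=\nu\times\mathrm{curl}\bm u=0$); for a constant index it collapses the cross term to $R=(n-1)^{-1}\|\mathrm{curl}\bm u\|^2$, tying $R$ directly to the curl-term that $\eta$ multiplies.

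Next I would read the hypothesis geometrically. A short computation shows that $|\tau_1|>(\sqrt2-1)|\tau_2|$ is \emph{equivalent} to $\mathrm{Re}(\tau^2)>-\tfrac{1}{\sqrt2}|\tau|^2$, i.e.\ $\arg(\tau^2)\in(-\tfrac{3\pi}{4},\tfrac{3\pi}{4})$, strictly away from the negative real axis. Substituting $R=(n-1)^{-1}\|\mathrm{curl}\bm u\|^2$ rewrites the form as the nonnegative combination
$$\widetilde{\mathcal A}_\tau(\bm u,\bm u)=\tfrac{1}{n-1}\|\mathrm{curl}^2\bm u\|^2+\tfrac{n}{n-1}\tau^2\|\bm u\|^2+\Big(\eta-\tfrac{2\tau}{n-1}\Big)\|\mathrm{curl}\bm u\|^2,$$
of the three coefficients $z_1=(n-1)^{-1}>0$, $z_2=\tfrac{n}{n-1}\tau^2$ and $z_3=\eta-\tfrac{2\tau}{n-1}$. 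The angular hypothesis places $z_1$ (on the positive real axis) and $z_2$ (at argument $\arg\tau^2$) in a common open half-plane, and taking $\eta$ large pushes $\arg z_3$ arbitrarily close to $0$, hence into the same half-plane. Writing $\tau=|\tau|e^{i\theta}$ and rotating by $e^{-i\theta}$ (by $-e^{-i\theta}$ when $\tau_1<0$), all three rotated coefficients have strictly positive real part once $\eta$ is large enough (the $z_3$-part being $\eta\cos\theta-\tfrac{2|\tau|}{n-1}$ when $\tau_1>0$), so
$$\big|\widetilde{\mathcal A}_\tau(\bm u,\bm u)\big|\ge\mathrm{Re}\big(e^{-i\theta}\widetilde{\mathcal A}_\tau(\bm u,\bm u)\big)\ge c\big(\|\mathrm{curl}^2\bm u\|^2+\|\bm u\|^2+\|\mathrm{curl}\bm u\|^2\big)=c\,\|\bm u\|_{2,\mathrm{curl}}^2.$$
If one prefers to argue as in Lemma 3.1, one may discard the curl-term and recover it through the interpolation $2\|\mathrm{curl}\bm u\|\le\|\bm u\|+\|\mathrm{curl}^2\bm u\|$ of Lemma 2.1 in \cite{hong}.

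The hard part is to rule out the degenerate configuration in which the real and imaginary parts of $\widetilde{\mathcal A}_\tau(\bm u,\bm u)$ vanish \emph{simultaneously}: treating $R$ as a free parameter constrained only by $|R|\le\sqrt{AC}$, the value of the quadratic can be driven to zero, so the bare Cauchy--Schwarz bound is insufficient and one must exploit more structure. This is exactly where the three hypotheses conspire — the identity from $N=nI$ forbids $R$ from being large while $\|\mathrm{curl}\bm u\|^2$ is small, the largeness of $\eta$ then forces such a cancellation to occur only at $\bm u=0$, and the angular margin $\mathrm{Re}(\tau^2)>-\tfrac1{\sqrt2}|\tau|^2$ keeps $z_2$ from opposing $z_1$. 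The most delicate case is a genuinely variable index $n=n(\bm x)$, for which $R=\mathrm{Re}\!\int(n-1)^{-1}\mathrm{curl}^2\bm u\cdot\overline{\bm u}$ is no longer exactly proportional to $\|\mathrm{curl}\bm u\|^2$; there I would control it by Cauchy--Schwarz and absorb the discrepancy using precisely the $\tfrac{\pi}{4}$ slack in $\arg(\tau^2)$ together with large $\eta$, and checking that this absorption leaves every one of the three weights with a uniform positive coefficient is the step that forces the explicit threshold $\sqrt2-1$ rather than the weaker $\tau_1\neq0$.
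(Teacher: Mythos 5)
Your proposal is correct and reaches the same conclusion, but it organizes the key positivity step differently from the paper. Both arguments test with $\bm v=\bm u$, both rely on the integration-by-parts identity (\ref{auxi}) to convert the dangerous cross term $\tau\left((n-1)^{-1}\bm u,\mathrm{curl}^2\bm u\right)$ into first-order quantities that a large $\eta$ can absorb, and both use the angular hypothesis only to prevent the $\tau^2$ mass term from cancelling the positive curl--curl term. Where you differ is in how the modulus of the remaining complex combination is bounded below: the paper applies the fixed inequality $|x+iy|\ge\tfrac{1}{\sqrt2}(|x|+|y|)$ (in effect a rotation by $\pm\pi/4$), which is precisely what produces the threshold $2|\tau_1\tau_2|>\tau_2^2-\tau_1^2$, i.e.\ $|\tau_1|>(\sqrt2-1)|\tau_2|$; you instead rotate by the $\tau$-dependent phase $e^{-i\arg\tau}$ so that all three coefficients $z_1,z_2,z_3$ land in a common open half-plane, and your computation in fact needs only $\tau_1\ne0$, the $(\sqrt2-1)$ margin then serving to make the coercivity constant uniform over the admissible cone (where $\cos(\arg\tau)\ge\cos(3\pi/8)$). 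Your identification of $\widetilde{\mathcal A}_\tau(\bm u,\bm u)$ as a real-coefficient quadratic in $\tau$, and of the hypothesis as $\arg(\tau^2)\in(-\tfrac{3\pi}{4},\tfrac{3\pi}{4})$, is a clarifying reformulation not present in the paper. The one place you must write out in full is the variable index $n=n(\bm x)$, which is the case the lemma actually covers: your displayed decomposition with scalar weights $\tfrac{1}{n-1}$ and $\tfrac{n}{n-1}$ presumes $n$ constant, and for variable $n$ the ``Cauchy--Schwarz on the discrepancy'' must be applied \emph{after} the integration by parts, to the correction term $\left(\nabla((n-1)^{-1})\times\bm u,\mathrm{curl}\bm u\right)\lesssim\|\bm u\|\,\|\mathrm{curl}\bm u\|$, and not to the raw second-order pairing, which, as you yourself note, bare Cauchy--Schwarz cannot handle. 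Once that is made explicit (Young's inequality with a small $\varepsilon$ absorbed by the rotated mass term and the remainder absorbed by a large $\eta$, exactly as in the paper's final estimate), your argument closes.
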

\begin{proof} Integrating by parts we have
\begin{align}\label{auxi}
\left((n-1)^{-1}\bm u, \mathrm{curl}^2 \bm u\right)&=\left(\mathrm{curl}((n-1)^{-1}\bm u), \mathrm{curl} \bm u\right)\nonumber\\
&=\left((n-1)^{-1}\mathrm{curl}\bm u+\nabla((n-1)^{-1})\times \bm u, \mathrm{curl} \bm u\right).
\end{align}
 Pick up any $\bm u\in\mathbf{H}_0(\mathrm{curl}^2,D)$. We have
\begin{align*}
|\widetilde{\mathcal A}_\tau(\bm u,\bm u)|&= \big|\left((n-1)^{-1}\mathrm{curl}^2 \bm u, \mathrm{curl}^2 \bm u\right)-\tau
    \left((n-1)^{-1}\bm u, \mathrm{curl}^2 \bm u\right)
 -\tau \left((n-1)^{-1}\mathrm{curl}^2 \bm u,\bm u \right)\\
 &~~~+\tau^2 \left(n(n-1)^{-1}  \bm u,\bm u \right)+\eta \left(\mathrm{curl} \bm u,\mathrm{curl}\bm u \right)\big|\\
 &\ge \big|\left((n-1)^{-1}\mathrm{curl}^2 \bm u, \mathrm{curl}^2 \bm u\right)+\tau^2 \left(n(n-1)^{-1}  \bm u,\bm u \right)+\eta \left(\mathrm{curl} \bm u,\mathrm{curl}\bm u \right)\big|\\
 &~~~-2\left|\tau\left((n-1)^{-1}\bm u, \mathrm{curl}^2 \bm u\right)\right|\\
                 &\ge\frac1{\sqrt2}\big |(N^*-1)^{-1}\|\mathrm{curl}^2\bm u\|^2+(\tau_1^2-\tau_2^2)(n(n-1)^{-1}\bm u,\bm u)+\eta \|\mathrm{curl} \bm u\|^2\big|\\
                 &~~~~+\sqrt2|\tau_1\tau_2|(n(n-1)^{-1}\bm u,\bm u)-2\left|\tau\left((n-1)^{-1}\bm u, \mathrm{curl}^2 \bm u\right)\right|
\end{align*}
The fact $|\tau_1|>(\sqrt2-1)|\tau_2|$ implies $2|\tau_1\tau_2|>\tau_2^2-\tau_1^2$.
Then for both $\tau_1^2>\tau_2^2$ and $\tau_1^2\le\tau_2^2$ it holds
\begin{align*}
|\widetilde{\mathcal A}_\tau(\bm u,\bm u)|
                 &\ge\frac1{\sqrt2}\big ((N^*-1)^{-1}\|\mathrm{curl}^2\bm u\|^2+\eta \|\mathrm{curl} \bm u\|^2\big)\\
                 &~~~~+(\sqrt2|\tau_1\tau_2|+(\tau_1^2-\tau_2^2)/\sqrt2)N^*(N^*-1)^{-1}(\bm u,\bm u)\\
                 &~~~~-2\left|(N_*-1)^{-1}\tau\left(\mathrm{curl}\bm u, \mathrm{curl} \bm u\right)\right|
                 -|(n-1)^{-1}|_{1,\infty}^2|\tau|^2\|\mathrm{curl}\bm u\|^2/\varepsilon- \varepsilon\|\bm u\|^2
\end{align*}
Hence we choose $\eta>\sqrt2\left(2\left|(N_*-1)^{-1}\tau\right|
                 +|(n-1)^{-1}|_{1,\infty}^2|\tau|^2/\varepsilon\right)$ with
$0<\varepsilon<(\sqrt2|\tau_1\tau_2|+(\tau_1^2-\tau_2^2)/\sqrt2)N^*(N^*-1)^{-1}$.
The assertion is valid.
\end{proof}

Hence we can define the solution operator $\widetilde T_\tau\bm f\in \mathbf{H}_0(\mathrm{curl}^2,D)\cap\b H(\mathrm{div},D)$ for $\mathrm{curl}\bm f\in\b L^2(D)$:
\begin{align}\label{1.10s}
\widetilde{\mathcal A}_{\tau}(\widetilde T_\tau\bm f,\bm v)= {\mathcal B}(\bm f,\bm v),~\forall \bm v  \in\mathbf{H}_0(\mathrm{curl}^2,D)
\end{align}
and its discrete operator
$\widetilde T_{\tau,h}\bm f\in \mathbf{U}_h$:
\begin{align}\label{1.10s1}
 \widetilde{\mathcal A}_{\tau}(\widetilde T_{\tau,h}\bm f,\bm v)=\mathcal B(\bm f,\bm v),~\forall \bm v  \in\mathbf{U}_h.
\end{align}
Like in Lemma 3.2, we know that $\widetilde T_\tau$ is compact in   
$\mathbf{H}_0(\mathrm{curl}^2,D)$.
Similar as in (\ref{1.16}), we can infer    the uniform convergence
\begin{align}\label{1.16t}
 \|\widetilde T_{\tau,h}-\widetilde T_\tau\|_{2,\mathrm{curl}}\rightarrow0.
\end{align}
Now we consider  the following  generalized eigenvalue problem
\begin{align}
 \widetilde{\mathcal A}_{\tau}(\bm u,\bm v)=(\widetilde\lambda(\tau)+\eta){\mathcal B}(\bm u,\bm v),~\forall \bm v  \in~\mathbf{H}_0(\mathrm{curl}^2,D).
\end{align}
and its discrete problem
\begin{align}\label{3.5}
\widetilde{\mathcal A}_{\tau}(\bm u_h,\bm v_h)=(\widetilde\lambda_h(\tau)+\eta) {\mathcal B}(\bm u_h,\bm v_h),~\forall \bm v _h \in\mathbf{H}_h.
\end{align}
Then $\widetilde\lambda(\tau)$ and $\widetilde\lambda_h(\tau)$ is a continuous function of $\tau$. From (\ref{3.1}), the Maxwell's transmission eigenvalue is the root of
$$\widetilde f(\tau):=\widetilde\lambda(\tau)-\tau$$
while the discrete eigenvalue in  (\ref{1.7}) is the root of
$$\widetilde f_h(\tau):=\widetilde\lambda_h(\tau)-\tau.$$
The error estimates of the discrete problem (\ref{3.5}) can be derived  like in Section 3. Hence we give this result without detailed proof.
\begin{lem}
    Let $\widetilde\lambda_h(\tau)$ be an eigenvalue of the  problem (\ref{3.5}) that converges to $\widetilde\lambda(\tau)$. Assume the ascent of $\widetilde\lambda(\tau)$ is one.
    Let the eigenfunction space $M(\widetilde\lambda(\tau))$ satisfy  $M(\widetilde\lambda(\tau))\subset \b H^{s-1}(D)$, $\mathrm{curl}\left(M(\widetilde\lambda(\tau))\right)\subset \b H^s(D)$ with $l+1\ge s>1$ then
\begin{eqnarray}
&&|\widetilde\lambda(\tau)-\widetilde\lambda_{h}(\tau)|\lesssim h^{2(s-1)}.
\end{eqnarray}
\end{lem}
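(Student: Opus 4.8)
The plan is to mirror the proof of Lemma 3.3, reducing the generalized eigenvalue problem (\ref{3.5}) to a standard eigenvalue problem for the compact operator $\widetilde T_\tau$ and then invoking the Babuska--Osborn spectral approximation theory \cite{babuska}. First I would observe that $(\widetilde\lambda(\tau),\bm u)$ solves the continuous generalized problem if and only if $\widetilde T_\tau\bm u=(\widetilde\lambda(\tau)+\eta)^{-1}\bm u$, and likewise $(\widetilde\lambda_h(\tau),\bm u_h)$ solves (\ref{3.5}) if and only if $\widetilde T_{\tau,h}\bm u_h=(\widetilde\lambda_h(\tau)+\eta)^{-1}\bm u_h$; this follows from the defining relations for $\widetilde T_\tau$ and $\widetilde T_{\tau,h}$ together with the nondegeneracy granted by the coercivity in Lemma 4.1. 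Writing $\mu=(\widetilde\lambda(\tau)+\eta)^{-1}$ and $\mu_h=(\widetilde\lambda_h(\tau)+\eta)^{-1}$, one has $|\widetilde\lambda(\tau)-\widetilde\lambda_h(\tau)|=|\mu-\mu_h|/|\mu\mu_h|$, so since $\mu$ and $\mu_h$ stay bounded away from $0$ for $h$ small, it suffices to prove $|\mu-\mu_h|\lesssim h^{2(s-1)}$.

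The essential new feature, compared with Section 3, is that for complex $\tau$ the form $\widetilde{\mathcal A}_\tau$ is no longer Hermitian (the added term $\eta\mathcal B$ is Hermitian, but ${\mathcal A}_\tau$ is not when $\tau\notin\mathbb R$). Hence the clean ``squared'' estimate cannot be read off from a single primal eigenfunction error as in the self-adjoint case; instead the non-self-adjoint form of the Babuska--Osborn theory must be used. This requires the adjoint operators $\widetilde T_\tau^*$ and $\widetilde T_{\tau,h}^*$, defined through the adjoint sesquilinear form of $\widetilde{\mathcal A}_\tau$, and the associated dual eigenspace $M^*(\widetilde\lambda(\tau))$. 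Under the ascent-one assumption the theory yields $|\mu-\mu_h|\lesssim \|(\widetilde T_\tau-\widetilde T_{\tau,h})|_{M(\widetilde\lambda)}\|_{2,\mathrm{curl}}\,\|(\widetilde T_\tau^*-\widetilde T_{\tau,h}^*)|_{M^*(\widetilde\lambda)}\|_{2,\mathrm{curl}}$.

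It then remains to bound each factor by $h^{s-1}$. For the primal factor, coercivity of $\widetilde{\mathcal A}_\tau$ (Lemma 4.1) and C\'ea's lemma give $\|(\widetilde T_{\tau,h}-\widetilde T_\tau)\bm f\|_{2,\mathrm{curl}}\lesssim\inf_{\bm v\in\mathbf U_h}\|\widetilde T_\tau\bm f-\bm v\|_{2,\mathrm{curl}}$, exactly as in (\ref{1.12}); restricting $\bm f$ to the eigenspace $M(\widetilde\lambda(\tau))\subset\b H^{s-1}(D)$ with $\mathrm{curl}(M(\widetilde\lambda(\tau)))\subset\b H^s(D)$ and applying the interpolation estimates (\ref{3.10})--(\ref{3.11}) of Lemma 2.1 produces $\|(\widetilde T_\tau-\widetilde T_{\tau,h})|_{M(\widetilde\lambda)}\|_{2,\mathrm{curl}}\lesssim h^{s-1}$. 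The uniform convergence (\ref{1.16t}) guarantees that no spurious discrete eigenvalues appear and that $\mu_h\to\mu$. Running the identical argument for the adjoint problem gives $\|(\widetilde T_\tau^*-\widetilde T_{\tau,h}^*)|_{M^*(\widetilde\lambda)}\|_{2,\mathrm{curl}}\lesssim h^{s-1}$, and multiplying the two factors yields $|\mu-\mu_h|\lesssim h^{2(s-1)}$, whence the assertion.

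The step I expect to be the main obstacle is the dual estimate, since it demands that the dual eigenfunctions enjoy the same Sobolev regularity as the primal ones, i.e. $M^*(\widetilde\lambda(\tau))\subset\b H^{s-1}(D)$ with curl in $\b H^s(D)$. I would secure this from the regularity of the adjoint source problem: the adjoint form is obtained from $\widetilde{\mathcal A}_\tau$ by replacing $\tau$ with $\overline\tau$ and transposing the roles of the two arguments, so it retains the same quad-curl and elliptic structure, and the regularity bootstrap used to justify assumption (\ref{as}) applies verbatim to the adjoint. The ascent-one hypothesis is precisely what permits the clean product bound and discards any contribution of generalized eigenvectors; without it one would recover only the weaker rate governed by the ascent.
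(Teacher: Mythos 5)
Your proposal is correct and follows the route the paper intends: the paper itself gives no argument for this lemma beyond the remark that it ``can be derived like in Section 3,'' i.e., by recasting (\ref{3.5}) as an eigenvalue problem for the compact operators $\widetilde T_\tau$, $\widetilde T_{\tau,h}$ and invoking the Babuska--Osborn theory, which is exactly what you do. You are in fact more careful than the paper on the one genuinely delicate point: for complex $\tau$ the form $\widetilde{\mathcal A}_\tau$ is complex-symmetric but not Hermitian, so the doubled rate requires the non-self-adjoint version of the theory and hence an $h^{s-1}$ approximation bound on the \emph{adjoint} eigenspace $M^*(\widetilde\lambda(\tau))$ --- a regularity requirement that is not among the lemma's stated hypotheses. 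Your fix (the adjoint form coincides, up to conjugation, with $\widetilde{\mathcal A}_{\bar\tau}$, so the adjoint eigenfunctions are conjugates of eigenfunctions of a problem of the same structure and inherit the same Sobolev regularity) is sound and is the missing ingredient the paper's ``without detailed proof'' glosses over; strictly speaking it should be recorded as an additional assumption or a short auxiliary argument rather than taken for granted.
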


\begin{theorem}
Let $\widetilde f_h(\tau)$ and $\widetilde f(\tau)$ be two analytic functions on a close convex domain $B$ not containing zero.
Let $\{\tau_{h_i}\}_1^{\infty}\subset B$ satisfy $\widetilde f_{h_i}(\tau_{h_i})=0$  then there is $\tau^*$ such that
$\widetilde f(\tau^*)=0$ and a subsequence  $\tau_{h_i}\to\tau^*$ ($i\to\infty$). We adopt the following assumption.

{\bf Assumption \~{A}.} There is a positive constant $c$ such that $\min_{\tau\in B}|\widetilde f'_{h}(\tau)|\ge c$ for $h$ small enough.

Then  it holds
\begin{align}\label{4.15a}
 |\tau_{h_i}-\tau^*|\le|\widetilde f_{h_i}(\tau^*)-\widetilde f(\tau^*)| {\overline c},~for~i~large~enough.
\end{align}

Conversely, let the domain $B$ be such that $\tau^*\in[a,b]$ with $\widetilde f(\tau^*)=0$ and   $\{\tau_{h}: \widetilde f_{h}(\tau_{h})=0,\forall h<\delta\}\subset B$ for a small $\delta>0$. If Assumption \~A  is valid
then any sequence $\{\tau_{h_i}\}_1^{\infty}\subset B$ satisfying $\widetilde f_{h_i}(\tau_{h_i})=0$ will converge to $\tau^*$  and the above (\ref{4.15a}) are valid.

\end{theorem}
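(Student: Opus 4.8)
The plan is to transcribe the argument of Theorem 3.1 into the analytic (complex) setting, replacing the Lagrange mean value theorem—which is unavailable for complex-valued functions—by the fundamental theorem of calculus along a line segment, and then to recover the missing lower bound on the resulting derivative average through analyticity. First I would establish a subsequential limit and the vanishing of $\widetilde f$ there. Since $B$ is a closed bounded (hence compact) subset of $\mathbb C$ and $\{\tau_{h_i}\}\subset B$, a subsequence (still denoted $\tau_{h_i}$) converges to some $\tau^*\in B$. The analog of Lemma 4.2 together with the uniform operator convergence \eqref{1.16t} gives $\widetilde f_{h_i}\to\widetilde f$ pointwise on $B$; since $\{\widetilde f_{h_i}\}$ is a locally bounded family of analytic functions, Vitali's theorem upgrades this to uniform convergence on the compact set $B$. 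Then
\begin{align*}
|\widetilde f(\tau^*)|\le |\widetilde f(\tau^*)-\widetilde f(\tau_{h_i})|+|\widetilde f(\tau_{h_i})-\widetilde f_{h_i}(\tau_{h_i})|\to0,
\end{align*}
using continuity of $\widetilde f$, uniform convergence, and $\widetilde f_{h_i}(\tau_{h_i})=0$, so that $\widetilde f(\tau^*)=0$.

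Next I would derive the quantitative bound \eqref{4.15a}. Convexity of $B$ guarantees that the segment from $\tau_{h_i}$ to $\tau^*$ lies in $B$, so the fundamental theorem of calculus along this segment yields
\begin{align*}
\widetilde f_{h_i}(\tau^*)-\widetilde f_{h_i}(\tau_{h_i})=(\tau^*-\tau_{h_i})\,g_i,\qquad g_i:=\int_0^1 \widetilde f'_{h_i}\bigl(\tau_{h_i}+t(\tau^*-\tau_{h_i})\bigr)\,dt.
\end{align*}
Because $\widetilde f_{h_i}(\tau_{h_i})=0$ and $\widetilde f(\tau^*)=0$, the left side equals $\widetilde f_{h_i}(\tau^*)-\widetilde f(\tau^*)$. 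Uniform convergence of analytic functions forces uniform convergence of their derivatives on $B$ (Cauchy's estimate on a slightly larger contour inside the neighbourhood of analyticity); combined with the shrinking of the segment to $\tau^*$, this gives $g_i\to \widetilde f'(\tau^*)$. Passing to the limit in Assumption $\widetilde{\mathrm A}$, namely $|\widetilde f'_{h_i}|\ge c$ on $B$, shows $|\widetilde f'(\tau^*)|\ge c$, whence $|g_i|\ge c$ for $i$ large. Dividing gives $|\tau^*-\tau_{h_i}|\le |\widetilde f_{h_i}(\tau^*)-\widetilde f(\tau^*)|\,\overline c$ with $\overline c=1/c$, which is \eqref{4.15a}.

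For the converse I would argue by contradiction, but route the argument through analyticity rather than the averaged identity. Since $\widetilde f$ is analytic and $|\widetilde f'(\tau^*)|\ge c>0$, the zero $\tau^*$ is isolated; shrinking $\delta$ if necessary we may take $B$ so that $\tau^*$ is the only zero of $\widetilde f$ in $B$. If some sequence of discrete zeros failed to converge to $\tau^*$, a subsequence would stay at distance $\ge\varepsilon$ and, by compactness, converge to some $\tau^{**}\in B$; the forward direction would then give $\widetilde f(\tau^{**})=0$ with $\tau^{**}\ne\tau^*$, contradicting uniqueness. Hence every such sequence converges to $\tau^*$, and \eqref{4.15a} follows from the forward estimate. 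Equivalently, Hurwitz's theorem applied to $\widetilde f_{h_i}\to\widetilde f$, using that Assumption $\widetilde{\mathrm A}$ makes all zeros simple, isolates the unique discrete zero near $\tau^*$.

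The main obstacle is precisely the loss of the Lagrange mean value theorem. In Theorem 3.1 the real MVT selects a single interior point $\psi$ at which $|f'_{h_i}(\psi)|\ge c$, so no cancellation can occur; over $\mathbb C$ the natural replacement is the average $g_i$ of $\widetilde f'_{h_i}$ along the segment, which could in principle be small through cancellation even though each value has modulus $\ge c$. Recovering the lower bound $|g_i|\ge c$ in the forward direction relies on the segment shrinking to $\tau^*$ and on the uniform convergence $\widetilde f'_{h_i}\to\widetilde f'$ coming from Cauchy's formula—the analytic ingredient that the real case obtains for free from real differentiability. In the converse, where the segment need not shrink, this averaging argument cannot by itself exclude cancellation (indeed the average degenerates to $(\widetilde f(\tau^{**})-\widetilde f(\tau^*))/(\tau^{**}-\tau^*)$, which vanishes when $\tau^{**}$ is another zero), so one is forced to exploit analyticity directly through isolated zeros or Hurwitz's theorem to pin down the accumulation point.
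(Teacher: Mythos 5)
Your forward argument is correct but takes a genuinely different route from the paper. The paper's entire proof is one line: it repeats the argument of Theorem 3.1 after replacing the Lagrange mean value theorem by its componentwise complex analogue, $\widetilde f_{h_i}(\tau^*)-\widetilde f_{h_i}(\tau_{h_i})=\bigl(\mathrm{Re}\,\widetilde f'_{h_i}(\psi)+i\,\mathrm{Im}\,\widetilde f'_{h_i}(\eta)\bigr)(\tau_{h_i}-\tau^*)$ with two possibly distinct points $\psi,\eta$ on the segment, and then implicitly invokes Assumption \~{A} to bound the coefficient from below. You instead write the increment as $(\tau^*-\tau_{h_i})$ times the segment average $g_i$ of $\widetilde f'_{h_i}$ and recover $|g_i|\gtrsim c$ from $g_i\to\widetilde f'(\tau^*)$, obtained via Cauchy-estimate convergence of the derivatives together with the shrinking of the segment. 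Your route is the more careful one: Assumption \~{A} gives $|\widetilde f'_{h}(\tau)|\ge c$ pointwise, but this bounds neither the modulus of the mixed quantity $\mathrm{Re}\,\widetilde f'_{h}(\psi)+i\,\mathrm{Im}\,\widetilde f'_{h}(\eta)$ with $\psi\ne\eta$ nor the modulus of the average $g_i$ (both can vanish by cancellation), so the paper's sketch silently skips exactly the step you supply. The price is only a degraded constant in (\ref{4.15a}), which is harmless.

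The converse is where your proposal has a genuine gap, though it is one the paper shares. Your Hurwitz/isolated-zero argument needs $\tau^*$ to be the \emph{only} zero of $\widetilde f$ in $B$. The bound $|\widetilde f'|\ge c$ on $B$ makes every zero simple and isolated, but unlike the real case---where $|f'_h|\ge c$ forces strict monotonicity on $[a,b]$ and hence uniqueness of the root---it does not make $\widetilde f$ injective on $B$, so a second zero $\tau^{**}$ is not excluded by the stated hypotheses. Nor can you arrange uniqueness by ``shrinking $\delta$'': $\delta$ controls the mesh size, not the set $B$, and replacing $B$ by a smaller neighbourhood of $\tau^*$ would break the hypothesis that $B$ contains all discrete roots. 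Either uniqueness of the zero of $\widetilde f$ in $B$ must be added as an assumption, or the converse must be restricted to sequences already known to accumulate only at $\tau^*$. You correctly diagnose why the averaged identity degenerates here; be aware that the paper's one-line converse does not resolve the difficulty either, since its mixed mean-value coefficient is likewise uncontrolled by Assumption \~{A} when the segment does not shrink.
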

\begin{proof}
The proof is similar to Theorem 3.1 except for the Lagrange mean value theorem
\begin{align*}
\widetilde f_{h_i}(\tau^*)-\widetilde   f_{h_i}(\tau_{h_i})=  (Re~\widetilde f'_{h_i}(\psi)+iIm~\widetilde f'_{h_i}(\eta))(\tau_{h_i}-\tau^*),\\
  \psi,\eta~are~on~the~line~between~\tau_{h_i}~and~
  \tau^*.
\end{align*}
\end{proof}

\begin{rem} In Theorem 4.1 the condition $\min_{\tau\in B}|\widetilde f'_{h}(\tau)|\ge c$ ($h$ small enough) can be reduced into $|\widetilde f'_{h}(\tau^*)|$ $\ge c$ ($h$ small enough).
We state the practicality of the conditions given in above theorem. The assumption $\{\tau_{h_i}\}_i^{\infty}\subset B$ is usually satisfied in the case that $\{\tau_{h_i}\}_i^{\infty}$ converges to a point $\tau^*$.  Assumption A  can be verified
when   $Re~\widetilde f^{}_{h_i}(\tau)$ or $Im~\widetilde f^{}_{h_i}(\tau)$  is a strictly monotonic function sequence along a line segment near and across $\tau_{h_i}$. By the same calculation  as in Remark 3.1
\begin{align}\label{3.222}
 \mathcal B(\bm u,\overline{\bm u}) \widetilde \lambda'(\tau^*)=-\left((N-I)^{-1}\mathrm{curl}^2 \bm u, \overline{\bm u}
\right)-\left((N-I)^{-1} \bm u,\mathrm{curl}^2 \overline{\bm u}
\right)+2\tau^*(N(N-I)^{-1}\bm u,\overline{\bm u})
 \end{align}
 with the eigenfunction   $\bm u:=\bm u(\tau^*)$ associated with $\widetilde \lambda(\tau^*)$.
Hence   $\lambda'(\tau^*)\ne1$ implies $|f'_{h_i}(\tau^*)|$ $\ge c$ ($i$ small enough).
\end{rem}

\begin{theorem}
Assume $\tau_h\to\tau^*\in(a,b)$ with $a>0$.  Let $\widetilde\lambda_h(\tau),\widetilde\lambda(\tau)\in C^1([a,b])$.   Then
\begin{eqnarray}
\label{2.36ss}
&&|\widetilde\lambda'_{h}(\tau_h)-\widetilde\lambda'(\tau^*)|+|\widetilde\lambda'_{h}(\tau^*)-\widetilde\lambda'(\tau^*)|\to0.
\end{eqnarray}
\end{theorem}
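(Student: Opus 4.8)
The plan is to follow the proof of Theorem 3.2 essentially verbatim, replacing the real derivative identities (\ref{3.221}) and (\ref{3.222t}) by their complex (sesquilinear) counterparts. The statement reduces to two ingredients: the uniform continuity of $\widetilde\lambda'(\cdot)$ on the compact interval $[a,b]$, which is immediate from the hypothesis $\widetilde\lambda\in C^1([a,b])$, and a \emph{uniform-in-$\tau$} convergence $\max_{\tau\in[a,b]}|\widetilde\lambda'_{h}(\tau)-\widetilde\lambda'(\tau)|\to0$. Granting these, the triangle inequality
\[
|\widetilde\lambda'_{h}(\tau_h)-\widetilde\lambda'(\tau^*)|\le|\widetilde\lambda'_{h}(\tau_h)-\widetilde\lambda'(\tau_h)|+|\widetilde\lambda'(\tau_h)-\widetilde\lambda'(\tau^*)|
\]
closes the argument for the first summand (the first term on the right controlled by the uniform convergence, the second by uniform continuity together with $\tau_h\to\tau^*$), while the second summand $|\widetilde\lambda'_{h}(\tau^*)-\widetilde\lambda'(\tau^*)|$ is itself a special case of the uniform convergence.

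To produce the derivative formulas I would differentiate the discrete generalized eigenproblem (\ref{3.5}) in $\tau$ exactly as in Remark 3.1, test with the eigenfunction $\bm u_h(\tau)$, and arrive at an identity of the form $\mathcal B(\bm u_h,\overline{\bm u_h})\,\widetilde\lambda'_{h}(\tau)=\cdots$, the discrete analog of the continuous identity (\ref{3.222}). Dividing through by $\mathcal B(\bm u_h,\overline{\bm u_h})$ then expresses $\widetilde\lambda'_{h}(\tau)$ as a ratio of sesquilinear quadratic forms in $\bm u_h(\tau)$, and (\ref{3.222}) does the same for $\widetilde\lambda'(\tau)$ in terms of $\bm u(\tau)$. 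Subtracting the two expressions and invoking the eigenfunction error estimate obtained ``like in Section 3'' (the complex analog of (\ref{2.34t}), namely $\|\bm u_h(\tau)-\bm u(\tau)\|_{2,\mathrm{curl}}\lesssim h^{s-1}$) together with the boundedness of the coefficient operators $(N-I)^{-1}$, $N(N-I)^{-1}$ and of $\tau$ on $[a,b]$ yields the desired convergence of the derivatives: each term is a product of $\mathbf L^2$- or $\mathbf H(\mathrm{curl}^2)$-pairings, so the difference is controlled by $\|\bm u_h-\bm u\|_{2,\mathrm{curl}}$.

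The main obstacle is twofold, and both difficulties stem from the sesquilinear setting. First, the normalization that isolates $\widetilde\lambda'$ is the bilinear pairing $\mathcal B(\bm u,\overline{\bm u})=(\mathrm{curl}\,\bm u,\mathrm{curl}\,\overline{\bm u})$, which is complex-valued and could in principle degenerate; to divide by it (as in (\ref{3.222})) and to keep the representation stable under the $h\to0$ perturbation one must guarantee that it stays bounded away from zero uniformly in $\tau\in[a,b]$, which I would obtain from the ascent-one (simplicity) assumption carried over from Lemma 4.1. Second, the eigenfunction estimate quoted above is \emph{a priori} a pointwise-in-$\tau$ statement, whereas the conclusion requires convergence that is \emph{uniform} over $[a,b]$; this uniformity is precisely what drives the $\max_{\tau}$ estimate. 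I would secure it from the continuous dependence of the coercive form $\widetilde{\mathcal A}_\tau$ on $\tau$ — the Lipschitz bound of the type $\|\widetilde T_{\tau_h,h}-\widetilde T_{\tau^*,h}\|_{2,\mathrm{curl}}\lesssim|\tau_h-\tau^*|$ established as in (\ref{1.22}) — combined with a finite covering of the compact interval $[a,b]$, so that the underlying spectral estimates hold with constants independent of $\tau$.
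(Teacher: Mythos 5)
Your proposal follows the same route as the paper's proof of Theorem 3.2, which the paper implicitly reuses for this statement: uniform continuity of $\widetilde\lambda'$ on $[a,b]$ from the $C^1$ hypothesis, the uniform convergence $\max_{\tau\in[a,b]}|\widetilde\lambda'_{h}(\tau)-\widetilde\lambda'(\tau)|\to0$ deduced from the explicit derivative identities (obtained by differentiating the eigenproblem and testing with the eigenfunction) together with the eigenfunction error estimate, and a closing triangle inequality. Your extra attention to the non-degeneracy of the sesquilinear normalization $\mathcal B(\bm u,\overline{\bm u})$ and to the uniformity in $\tau$ of the spectral estimates addresses points the paper leaves implicit, but the argument is essentially identical.
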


\begin{rem}
The above theorem implies that if $\tau_h\to\tau^*$ then $\widetilde f'_h(\tau_h)=\widetilde \lambda'_{h}(\tau_h)-1$ can be regarded as  an indicator to detect $|\widetilde f_h'(\tau^*)|=|\widetilde \lambda'_{h}(\tau^*)-1|$ greater than 0 strictly in Remark 3.
\end{rem}

The following theorem  is similar to Theorem 3.2 and thus we omit its proof.
\begin{theorem} Under the assumptions in Theorem 4.1,
    let $\tau_h=\widetilde \lambda_h(\tau_h)$ be an eigenvalue of the  problem (\ref{1.7}) that converges to $\tau^*=\widetilde \lambda(\tau^*)$ whose ascent is one.  Let the eigenfunction space $M(\widetilde \lambda(\tau^*))$ satisfy $M(\widetilde \lambda(\tau^*))\subset \b H^{s-1}(D)$, $\mathrm{curl}\left(M(\widetilde \lambda(\tau^*))\right)\subset \b H^s(D)$ with $l+1\ge s>1$ then
Let $\bm u_{h}(\tau_h)$ be the corresponding  eigenfunction of the problem (\ref{1.7})
  and $\|\bm u_{h}(\tau_h)\|_{2,\mathrm{curl}}=1$, then there exists an eigenfunction
$\bm u(\tau^*) $ and $s_0>1$ such that
\begin{eqnarray}
&&|\tau^*-\tau_h|\lesssim h^{2(s-1)},\\
&&\|\bm u_{h}(\tau_h)-\bm u_{}(\tau^*)\|_{2,\mathrm{curl}}\lesssim
h^{s-1} \\
&&\|\mathrm{curl}(\bm u_{h}(\tau_h)-\bm u_{}(\tau^*))\|_{}\lesssim
h^{s-2+s_0}.
\end{eqnarray}

\end{theorem}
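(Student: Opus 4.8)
The plan is to run the argument of Theorem 3.2 line by line in the shifted, coercive setting of Lemma 4.1, replacing the form $\mathcal A_\tau$ and its operators $T_\tau, T_{\tau,h}, T_{\tau_h,h}$ and Ritz projection $P_{\tau,h}$ by the tilded objects $\widetilde{\mathcal A}_\tau$, $\widetilde T_\tau, \widetilde T_{\tau,h}, \widetilde T_{\tau_h,h}$ and $\widetilde P_{\tau,h}$. Since the shift $\eta\mathcal B$ is independent of $\tau$ and only translates the spectral parameter, the eigenfunctions and eigenspace $M(\widetilde\lambda(\tau^*))$ coincide with those of \eqref{1.7}, and the compactness of $\widetilde T_{\tau}$ together with the uniform convergence \eqref{1.16t} is already in hand; this is what lets us invoke the Babuska--Osborn theory of \cite{babuska}.

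For the eigenvalue estimate I would note that $\widetilde f_{h_i}(\tau^*)-\widetilde f(\tau^*)=\widetilde\lambda_{h_i}(\tau^*)-\widetilde\lambda(\tau^*)$, so Lemma 4.2 gives $|\widetilde f_{h_i}(\tau^*)-\widetilde f(\tau^*)|\lesssim h^{2(s-1)}$; substituting into the fixed-point bound \eqref{4.15a} of Theorem 4.1 yields $|\tau^*-\tau_h|\lesssim h^{2(s-1)}$. For the $\mathbf H(\mathrm{curl}^2)$-norm estimate I would reproduce the analogue of \eqref{1.22} (valid because $\widetilde{\mathcal A}_{\tau^*}-\widetilde{\mathcal A}_{\tau_h}$ differs by $O(|\tau_h-\tau^*|)$, the $\eta$-term cancelling) and use $\widetilde T_{\tau,h}=\widetilde P_{\tau,h}\widetilde T_\tau$ to obtain, via spectral approximation,
\begin{align*}
\|\bm u(\tau^*)-\bm u_h(\tau_h)\|_{2,\mathrm{curl}}
&\lesssim\|(\widetilde P_{\tau^*,h}-I)\bm u(\tau^*)\|_{2,\mathrm{curl}}+\|(\widetilde T_{\tau^*,h}-\widetilde T_{\tau_h,h})\bm u(\tau^*)\|_{2,\mathrm{curl}}\\
&\lesssim h^{s-1}+|\tau^*-\tau_h|\lesssim h^{s-1},
\end{align*}
where the last step uses the eigenvalue estimate and $2(s-1)\ge s-1$.

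The substantive step is the $\mathbf H(\mathrm{curl})$-seminorm estimate, obtained by a duality argument built on the auxiliary problem $\widetilde{\mathcal A}_{\tau^*}(\widetilde{\bm\Phi},\bm v)=\mathcal B((\widetilde T_{\tau^*}-\widetilde T_{\tau^*,h})\bm f,\bm v)$ and the computation \eqref{1.29s}. The point I expect to demand the most care is re-verifying the a-priori regularity assumption \eqref{as} for this shifted problem; fortunately, testing with $\bm v=\nabla p$ annihilates the added term, since $\eta\mathcal B(\widetilde{\bm\Phi},\nabla p)=\eta(\mathrm{curl}\widetilde{\bm\Phi},\mathrm{curl}\nabla p)=0$, so the divergence identity \eqref{1.13sss}--\eqref{1.13} and the subsequent bootstrap through Lemma 3.2 of \cite{brenner1} and the Poisson regularity estimate carry over unchanged, giving $\widetilde{\bm\Phi}\in\b H^{s_0-1}(D)$ and $\mathrm{curl}\widetilde{\bm\Phi}\in H^{s_0}(D)$ with $s_0>3/2$. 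With \eqref{as} in force, the duality estimate delivers $\|(\widetilde T_{\tau^*}-\widetilde T_{\tau^*,h})|_{M(\widetilde\lambda)}\|_{\dot{\b H}_0(\mathrm{curl})}\lesssim h^{s_0+s-2}$ and the perturbation bound $\|\widetilde T_{\tau_h,h}-\widetilde T_{\tau^*,h}\|_{\dot{\b H}_0(\mathrm{curl})}\lesssim|\tau_h-\tau^*|$; applying Theorem 7.4 of \cite{babuska} and inserting the eigenvalue estimate then gives $\|\mathrm{curl}(\bm u(\tau^*)-\bm u_h(\tau_h))\|\lesssim h^{s+s_0-2}+|\tau_h-\tau^*|\lesssim h^{s-2+s_0}$, completing the three bounds. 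The genuine obstacle throughout is confirming that coercivity of $\widetilde{\mathcal A}_\tau$ (Lemma 4.1) and this regularity/duality machinery remain intact for the complex spectral parameter, but both reduce to the cancellation of the $\eta$-shift against curl-free test functions and the $\tau$-independence of that shift.
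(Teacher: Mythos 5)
Your proposal is correct and is essentially the paper's own argument: the paper in fact omits the proof of this theorem entirely, stating only that it is ``similar to Theorem 3.2'' (i.e.\ to the real-eigenvalue error theorem), and your line-by-line transcription with the tilded operators $\widetilde T_\tau,\widetilde T_{\tau,h},\widetilde T_{\tau_h,h}$ is exactly that intended analogy. Your observation that the $\eta\mathcal B$ shift is annihilated by curl-free test functions, so that the divergence identity and the regularity bootstrap for the duality argument carry over verbatim, is precisely the point that justifies the paper's omission.
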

%

\section{Numerical experiment}
In this section we shall show some numerical results to verify the condition $|f'_{h}(\tau_{})|\ge c$ in a  neighbourhood  of  $\tau_{h}$ for $h$ small enough in Theorems 3.1 and 4.1 (see also Remarks 3.1 and 4.1). For the case of the complex $\tau_{h}$, it   suffices to verify $|Im~f'_{h}(\tau)|\ge c$  in a  neighbourhood  of  $\tau_{h}$ for a small $h$.
 First of all, in order to  compute the eigenvalue problem (\ref{1.7}) we rewrite it as the linear formulation
\begin{align}\label{4.1}
    \begin{aligned}
    &\left((N-1)^{-1}\mathrm{curl}^2 \bm u_h , \mathrm{curl}^2 \bm v \right)= \tau_h
    \left((N-1)^{-1}\bm u_h, \mathrm{curl}^2 \bm v \right) \\
    &~~~~~~+ \tau_h \left(\mathrm{curl}^2 \bm u_h , N(N-I)^{-1}\bm v\right)- \tau_h  \left(\bm \omega_h, N(N-1)^{-1}\bm v\right),~~\forall \bm v\in \b U_h,\\
    &\left(\bm  \omega_h,
   \bm   z\right) = \tau_h \left(\bm u_h,\bm z\right),~~\forall \bm z\in \b U_h
    \end{aligned}
    \end{align}
which can be solved via direct eigenproblem solver. The second family of curl-curl    element with $l=3$ and the lowest order curl-curl element is adopted   to  solve the Maxwell's transmission eigenvalue
problem. The computed domain $D$ is chosen as
the unit square
$(0,1)^2$ or the L-shaped domain $(-1,1)^2\backslash \{[0,1)\times (-1,0]\}$.
The index of refraction $N$ is set to be the scalar-matrix $16I$ or $(8-x_1+x_2)I$ on the square and the L-shaped domains while set to be the  matrix
\begin{eqnarray*}
\left[
\begin{tabular}{cc}
16&0\\
 0&$16+x_1-x_2$
\end{tabular}
\right]
~or~
\left[
\begin{tabular}{cc}
16&$x_1$\\
 $x_1$&2
\end{tabular}
\right]
\end{eqnarray*}
on the L-shaped domain. The computed lowest four  eigenvalues obtained by the second family of curl-curl    element with $l=3$  are given in Tables 1-3 and those obtained by the lowest order curl-curl    element are given in Tables 4-6. It should be noted that all computed complex eigenvalues satisfy the assumption $Re~\tau>(\sqrt2-1)Im~\tau$ in Section 4.
In our computation, we take $h=\sqrt2/16$ on the square domain and $h=\sqrt2/8$ on the L-shaped domain to verify $|Im~f'_{h}(\tau)|\ge c$ in a small neighbourhood  of  $\tau_{h}$ (or equivalently $|Im~f'_{h}(\tau_h)|\ge c$). For this propose we choose a small neighbourhood of  a real number $\tau_h$  or a small line segment near and across a complex number $\tau_{h}$. According to the magnitude of eigenvalues, the neighbourhood is chosen between $(k_h-0.03)^2$ and $(k_h+0.03)^2$ and the line segment on the complex plane possesses the endpoints  $(k_h-0.03)^2$ and $(k_h+0.03)^2$. For example, if $k_h\approx 1.92$ then its neighbourhood is [$1.89^2,1.95^2$] while
if $k_h\approx 1.20+0.44i$ then the associated line segment is [$(1.17+0.44i)^2, (1.23+0.44i)^2$] on the complex plane. It can be seen from Figure 1 that for all computed cases $|f'_{h}(\tau_{})|$ is strictly greater than zero  in a  neighbourhood  of  $\tau_{h}$. In addition, according to Remark 2 and 4 we also adopt the formulas (\ref{3.221}) and (\ref{3.222}) to compute $f'_{h}(\tau_{})$ which are stable near fixed positive constant given in Tables 1-3. All of these numerical evidences indicate the conditions in Theorems 3.1 and 4.1 are valid. The computed convergence order $r_h$ of numerical eigenvalues on the square domain are around six, which is consistent with the theoretical results in Theorems 3.3 and 4.3. However, the convergence order computed by the second family of curl-curl element with $l = 3$ is much worse due to the singularities of the eigenvalue problem towards the L corner point. The   convergence order computed by the lowest order curl-curl element performs much well in this case.

At last it can be seen that all real numerical eigenvalues approximate the real eigenvalue from upper. In fact,
due to (\ref{t}) we have for the real eigenvalues $\tau_{h}$ and $\tau^*$
\begin{align}
  \tau_{h}-\tau^* =-(\lambda_{h}(\tau^*)-\lambda(\tau^*))/  f'_{h}(\psi).
\end{align}
By Lemma 9.1 in \cite{babuska}
\begin{eqnarray}\label{s4.8}
\lambda_{h}(\tau^*)-\lambda(\tau^*)=\frac{\mathcal A_{\tau_*}(\bm u_{h}(\tau^*)-\bm u(\tau^*),\bm u_{h}(\tau^*)-\bm u^{}(\tau^*))}{\mathcal B(\bm u_{h}(\tau^*),\bm u_{h}(\tau^*))}-\lambda(\tau^*)
\frac{\|\mathrm{curl}(\bm u_{h}(\tau^*)-\bm u(\tau^*))\|^2}{\mathcal B(\bm u_{h}(\tau^*),\bm u_{h}(\tau^*))}.
\end{eqnarray}
According to Theorem 3.3, the first term at the right-hand side is dominated so that $\lambda_{h}(\tau^*)>\lambda(\tau^*)$. This together with the fact $f'_{h}(\psi)<0$  shown in Fig. 1 yields the assertion $\tau_{h}>\tau^*$.

\begin{figure}
  \centering
  \includegraphics[width=2.5in]{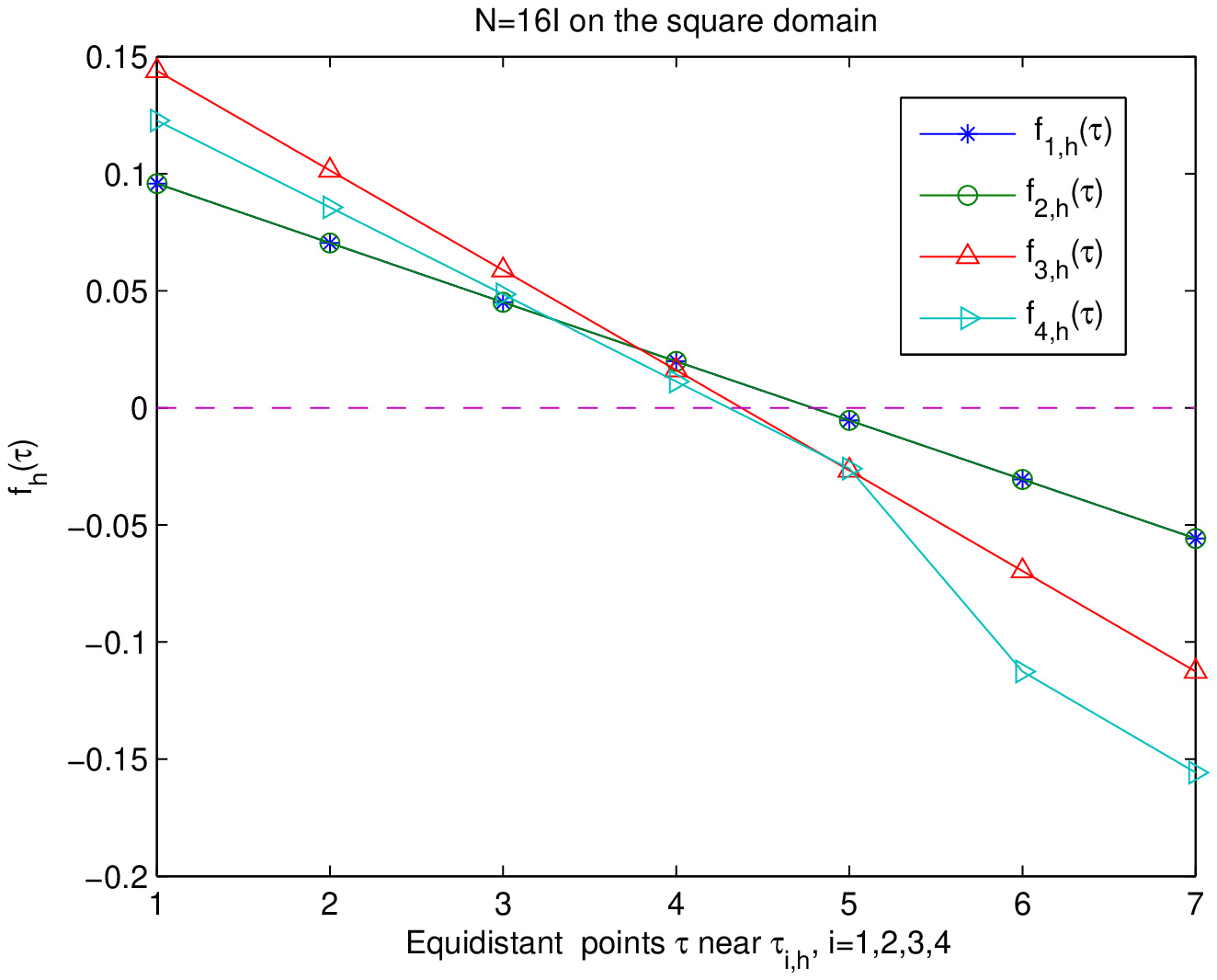}
  \includegraphics[width=2.5in]{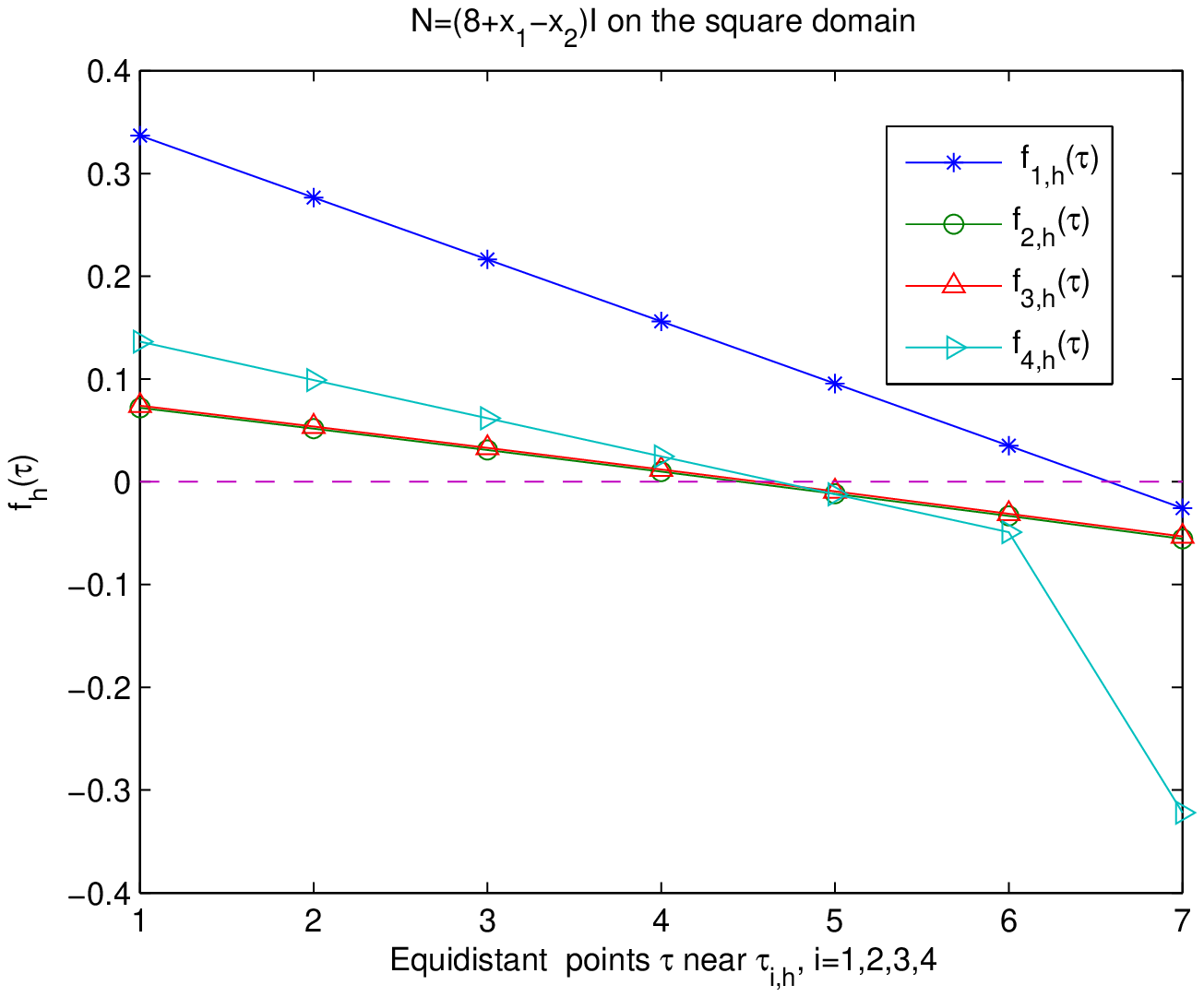}\\
  \includegraphics[width=2.5in]{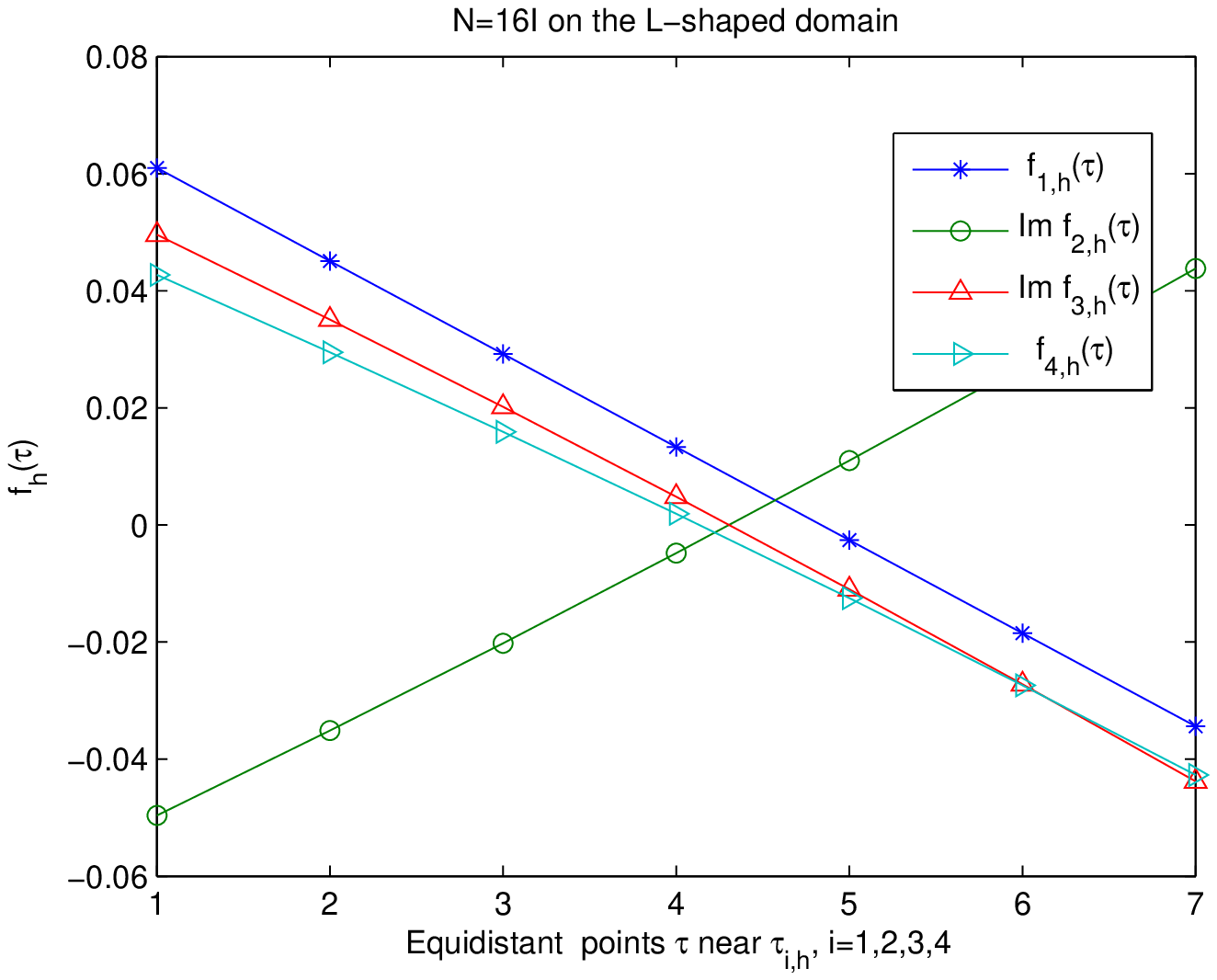}
  \includegraphics[width=2.5in]{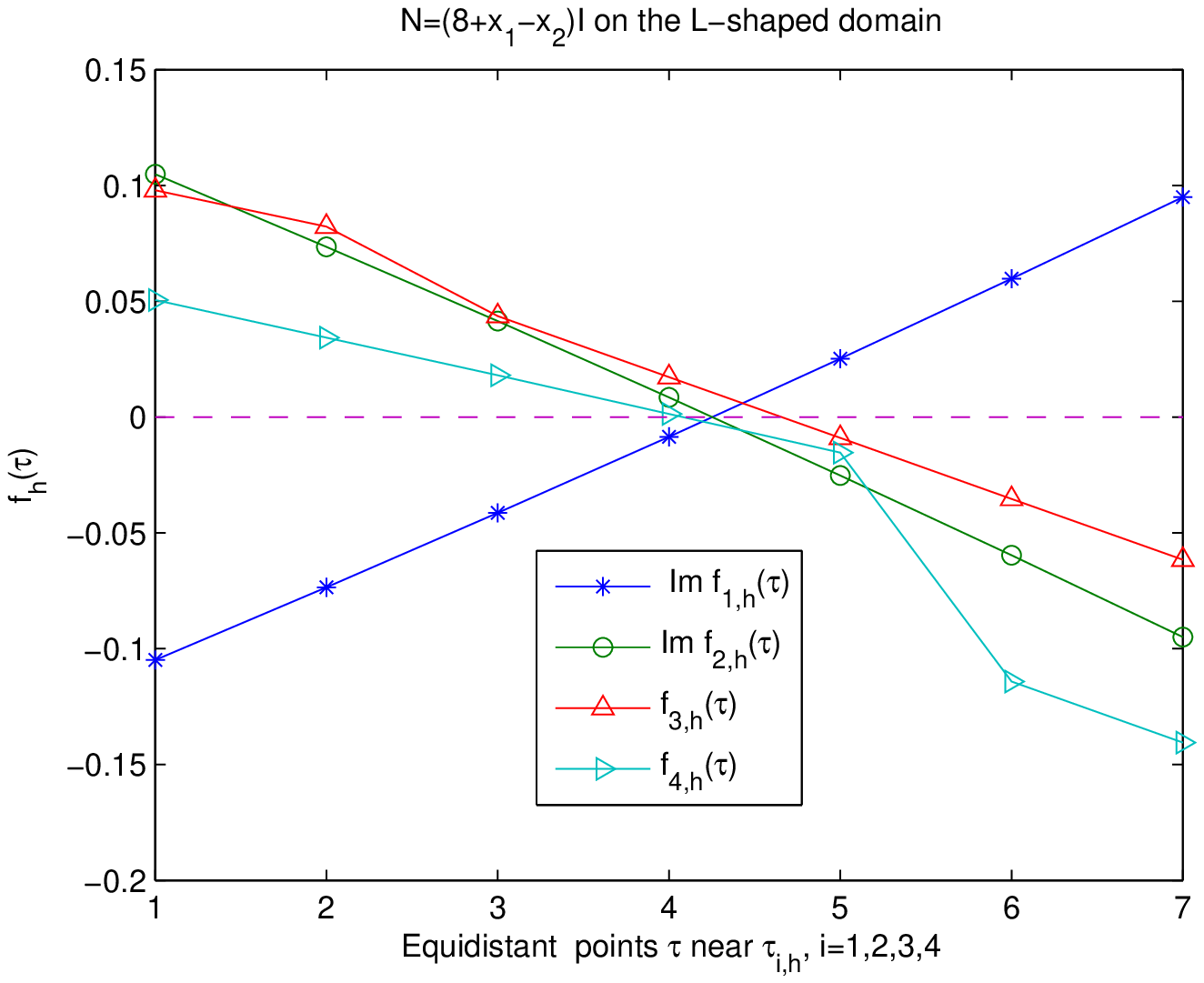}
  \includegraphics[width=2.5in]{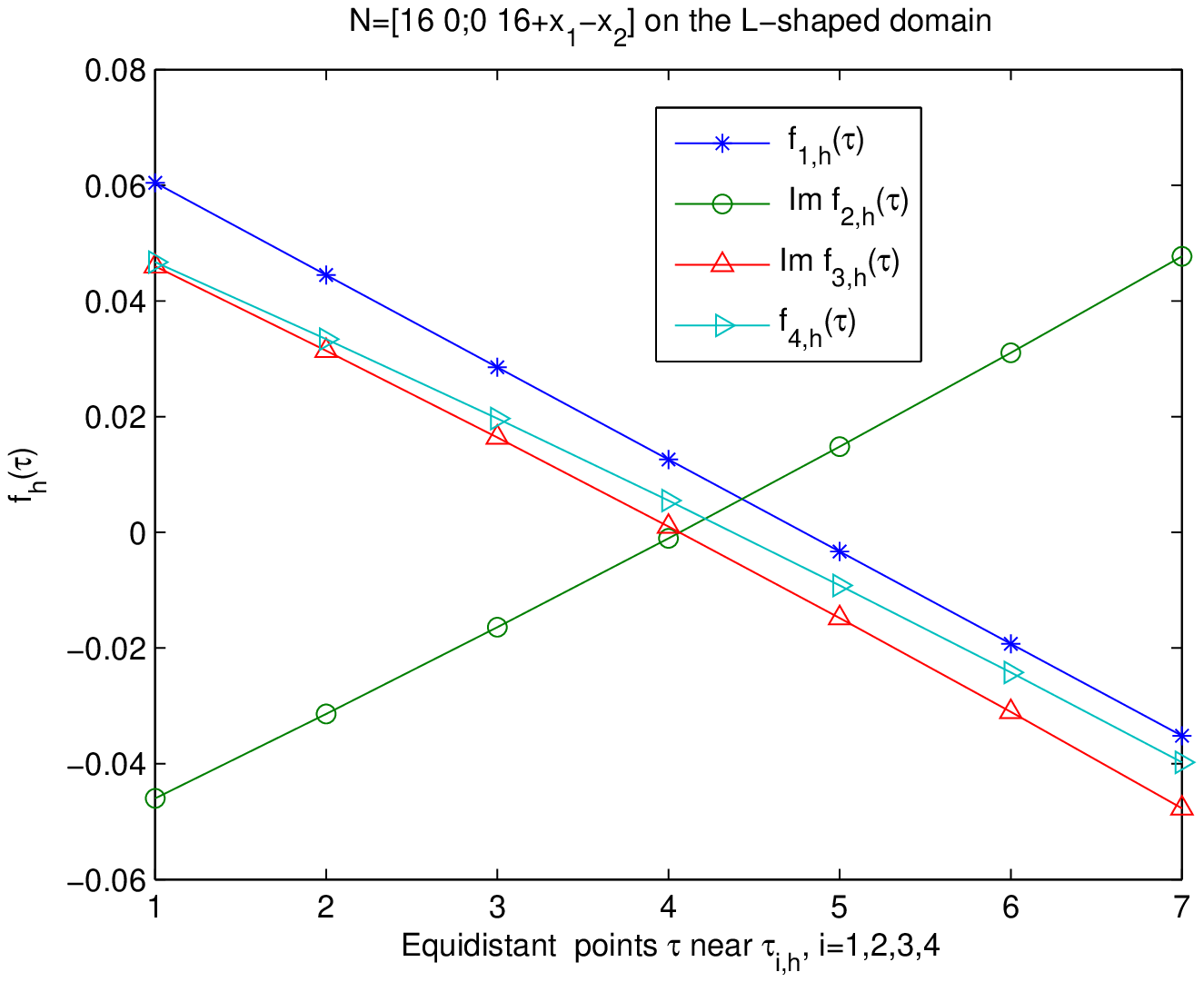}
  \includegraphics[width=2.5in]{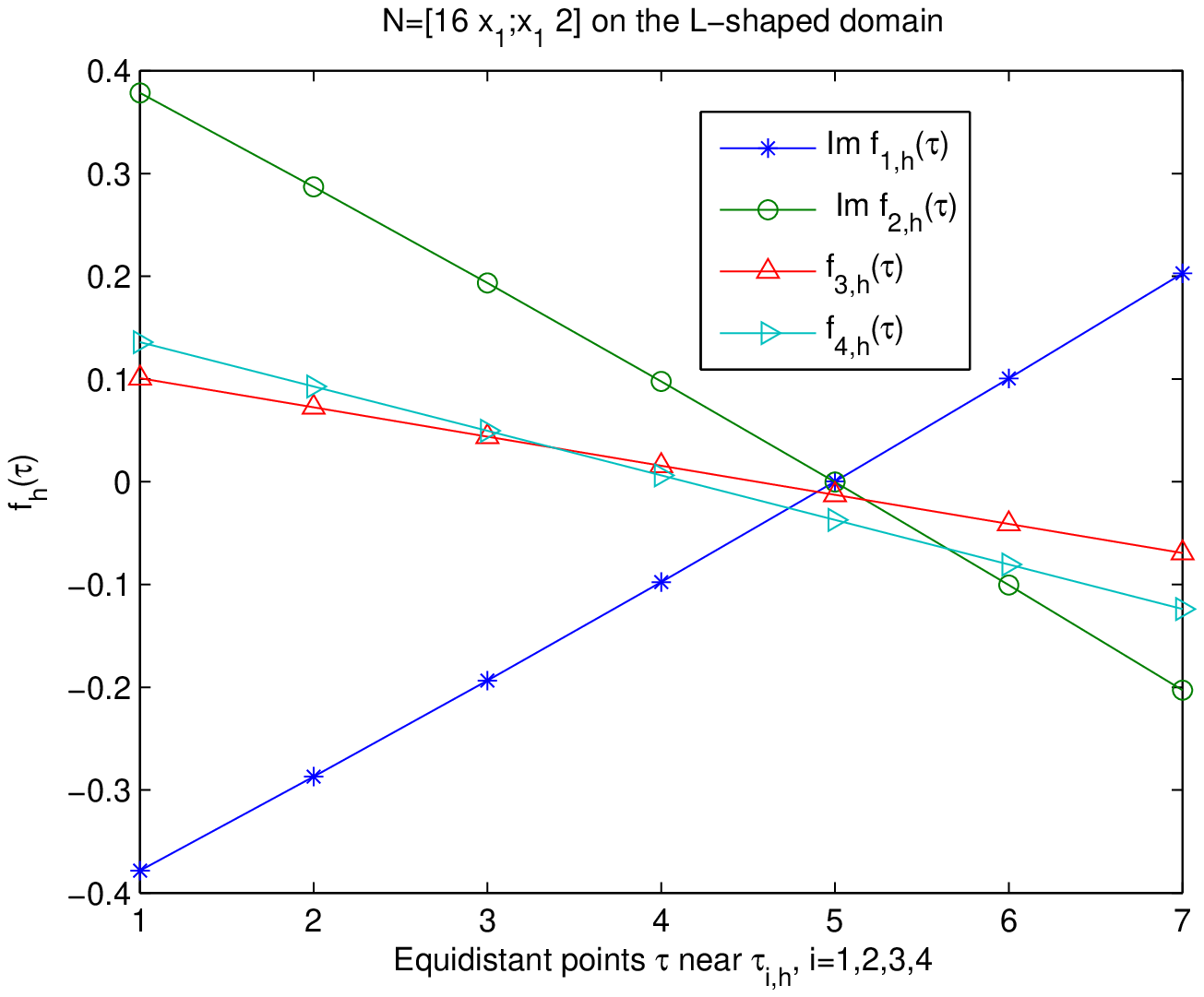}
  \caption{$f_h(\tau)$ computed by   the second family of curl-curl    element ($l=3$).}
\end{figure}

\begin{table}
\caption{Numerical eigenvalues by the second family of curl-curl element ($l=3$) on the unit square with
 $h_0=\frac{\sqrt2}{8}$.}
\begin{center} \footnotesize
\begin{tabular}{ccccc|ccccc}\hline
\multicolumn{5}{c|}{$N=16I$}&\multicolumn{3}{c}{$N=(8+x_1-x_2)I$}\\
$j$&$h$&$k_{j,h}$&$r_h$&$|f'_h(\tau_{j,h})|$&$k_{j,h}$&$r_h$&$|f'_h(\tau_{j,h})|$\\\hline
1&  $h_0$&1.927882001&&0.65&3.385891713&&0.80\\
1&  $\frac{h_0}2$& 1.927871544&5.14&0.65&3.385803303&5.92&0.80\\
1&  $\frac{h_0}4$&    1.927871248&&0.65&3.385801845&&0.80\\
2&  $h_0$&1.927882001&&0.65&3.505047783&&0.31\\
2&  $\frac{h_0}2$& 1.927871544&5.14&0.65&3.504614745&5.51&0.31\\
2&  $\frac{h_0}4$&    1.927871248&&0.65&3.504605216&&0.31\\
3&  $h_0$&    2.333811701&&0.92&3.505832045&&0.31\\
3&  $\frac{h_0}2$&2.333763623&5.78&0.92&3.505632716&4.82&0.31\\
3&  $\frac{h_0}4$&2.333762748&&0.92&3.505625649&&0.31\\
4&  $h_0$&2.343100969&&0.79&3.616924954&&0.51\\
4&  $\frac{h_0}{2}$&   2.343034872&5.75&0.79&3.616668920&5.37&0.51\\
4&  $\frac{h_0}{4}$& 2.343033643&&0.79&3.616662744&&0.51
\\\hline
\end{tabular}
\end{center}
\end{table}

\begin{table}
\begin{center} \footnotesize
\caption{Numerical eigenvalues by the second family of curl-curl element ($l=3$) on the L-shaped domain with
 $h_0=\frac{\sqrt2}{4}$.}
\begin{tabular}{ccccc|ccccc}\hline
\multicolumn{5}{c|}{$N=16I$}&\multicolumn{4}{c}{$N=(8+x_1-x_2)I$}\\
$j$&$h$&$k_{j,h}$&$r_h$&$|f'_h(\tau_{j,h})|$&$j$&$h$&$k_{j,h}$&$r_h$&$|f'_h(\tau_{j,h})|$\\\hline
1&  $h_0$& 1.17856958&&0.67&1,2&$h_0$&1.2921312&&1.90\\
1&  $\frac{h_0}{2}$&1.1783712&1.50&0.67&&&$\pm$0.6797234i\\
1&  $\frac{h_0}{4}$&   1.1783009&&0.67&1,2&$\frac{h_0}{2}$&1.2925357&1.31&1.89\\
3,4&  $h_0$&1.2025191&&0.96&&&$\pm$0.6797410i\\
 &  &  $\pm$0.4412079i&&&1,2& $\frac{h_0}{4}$&1.2926992&&1.89\\
3,4&  $\frac{h_0}{2}$&1.2030798&1.41&0.96&&&$\pm$0.6797388i\\
&& $\pm$0.4411595i&&&3&$h_0$&2.0370694&&0.65\\
3,4&  $\frac{h_0}{4}$& 1.2032896&&0.96&3&$\frac{h_0}{2}$&2.0365801&2.35&0.65\\
 && $\pm$0.4411257i&&&3&$\frac{h_0}{4}$&2.0364839&&0.65\\
2&  $h_0$& 1.2717694&&0.56&4&$h_0$&2.0630459&&0.40\\
2&  $\frac{h_0}{2}$&1.2713493&1.76&0.56&4&$\frac{h_0}{4}$&2.0608619&1.42&0.40\\
2&  $\frac{h_0}{4}$&   1.2712256&&0.56&4&$\frac{h_0}{4}$&2.0600483&&0.40\\
\hline
\end{tabular}
\end{center}
\end{table}

\begin{table}
\begin{center} \footnotesize
\caption{Numerical eigenvalues by the second family of curl-curl element ($l=3$)  on the L-shaped-domain with $h_0=\frac{\sqrt2}{4}$.}
\begin{tabular}{ccccc|ccccc}\hline
\multicolumn{5}{c|}{$N=[16~0;0~16+x_1-x_2]$}&\multicolumn{4}{c}{$N=[16~x_1;x_1~2]$}\\
$j$&$h$&$k_{j,h}$&$r_h$&$|f'_h(\tau_{j,h})|$&$j$&$h$&$k_{j,h}$&$r_h$&$|f'_h(\tau_{j,h})|$\\\hline
1&  $h_0$&1.188137&&0.67&1,2&  $h_0$&  1.5794070&& 4.67\\
1&  $\frac{h_0}{2}$&1.187914      &   1.47&0.67&&&    $\pm$1.0033762i\\
1&  $\frac{h_0}{4}$& 1.187833&&0.67&1,2&  $\frac{h_0}{2}$&1.5799704&0.89& 4.67\\
2,3&  $h_0$& 1.2000971&&0.97 && &     $\pm$1.0032588i\\
&& $\pm$0.4413211i&&&1,2&  $\frac{h_0}{4}$&     1.5802770&& 4.67\\
2,3&  $\frac{h_0}{2}$&1.2006542&1.40&0.96&& &           $\pm$1.0032045i\\
 & &     $\pm$0.4412676i&&&3&  $h_0$&  2.1671018&& 0.65\\
2,3&  $\frac{h_0}{4}$&    1.2008635&&0.96&3&  $\frac{h_0}{2}$& 2.1654994      &   2.08& 0.65\\
 &   &        $\pm$0.4412319i&&&3&  $\frac{h_0}{4}$&   2.1651206&& 0.65\\
4&  $h_0$&      1.2842037&&0.57&4&  $h_0$&     2.5257509&&  0.86\\
4&  $\frac{h_0}{2}$&1.2837800&1.77&0.57&4&  $\frac{h_0}{2}$&2.5214581&2.09&  0.86\\
4&  $\frac{h_0}{4}$& 1.2836553&& 0.57&4&  $\frac{h_0}{4}$& 2.5204517&&  0.86
\\\hline
\end{tabular}
\end{center}
\end{table}

\begin{table}
\caption{The numerical eigenvalues by lowest order element  on the unit square with
 $h_0=\frac{\sqrt2}{16}$.}
\begin{center} \footnotesize
\begin{tabular}{ccccc|ccccc}\hline
\multicolumn{5}{c|}{$N=16I$}&\multicolumn{3}{c}{$N=(8+x_1-x_2)I$}\\
$j$&$h$&$k_{j,h}$&$r_h$&$|f'_h(\tau_{j,h})|$&$k_{j,h}$&$r_h$&$|f'_h(\tau_{j,h})|$\\\hline
1&  $h_0$&1.9556&&0.64&3.4855&&0.77\\
1&  $\frac{h_0}2$&1.9348&2.00&0.65&3.4116&1.94&0.79\\
1&  $\frac{h_0}4$&1.9296&2.00&0.65&3.3923&1.98&0.80\\
1&  $\frac{h_0}8$&1.9283&&0.65&3.3874&&0.80\\
2&  $h_0$&1.9773&&0.62&3.6570&&0.32\\
2&  $\frac{h_0}2$&1.9400&2.04&0.65&3.5430&2.02&0.31\\
2&  $\frac{h_0}4$&1.9309&1.98&0.65&3.5149&2.01&0.31\\
2&  $\frac{h_0}8$&1.9286&&0.65&3.5079&&0.31\\
3&  $h_0$&2.3787&&0.86&3.8035&&0.39\\
3&  $\frac{h_0}2$&2.3465&1.81&0.89&3.5885&1.76&0.31\\
3&  $\frac{h_0}4$&2.3373&1.82&0.91&3.5252&2.03&0.31\\
3&  $\frac{h_0}8$&2.3347&&0.92&3.5097&&0.31\\
4&  $h_0$&2.4270&&0.80&3.8496&&0.45\\
4&  $\frac{h_0}{2}$&2.3624&2.12&0.80&3.6598&2.54&0.50\\
4&  $\frac{h_0}{4}$&2.3475&2.13&0.79&3.6272&2.04&0.51\\
4&  $\frac{h_0}{8}$&2.3441&&0.79&3.6193&&0.51
\\\hline
\end{tabular}
\end{center}
\end{table}

\begin{table}
\begin{center} \footnotesize
\caption{The numerical eigenvalues by lowest order element  on the L-shaped domain with
 $h_0=\frac{\sqrt2}{16}$.}
\begin{tabular}{ccccc|ccccc}\hline
\multicolumn{5}{c|}{$N=16I$}&\multicolumn{4}{c}{$N=(8+x_1-x_2)I$}\\
$j$&$h$&$k_{j,h}$&$r_h$&$|f'_h(\tau_{j,h})|$&$j$&$h$&$k_{j,h}$&$r_h$&$|f'_h(\tau_{j,h})|$\\\hline
1&  $h_0$& 1.1887&&0.66&1,2&$h_0$&1.2911&&1.91\\
1&  $\frac{h_0}{2}$&1.1810&1.94&0.67&&&$\pm$0.6823i\\
1&  $\frac{h_0}{4}$&1.1790&1.86&0.67&1,2&$\frac{h_0}{2}$&1.2919&1.54&1.90\\
1&  $\frac{h_0}{8}$&1.1784&&0.67&&&$\pm$0.6804i&&\\
3,4&  $h_0$&1.1992&&0.97&1,2&$\frac{h_0}{4}$&1.2924&1.41&1.90\\
 &  &  $\pm$0.4436i&&&&&$\pm$0.6799i&&\\
3,4&  $\frac{h_0}{2}$&1.2018&1.50&0.97&1,2&$\frac{h_0}{8}$&1.2926&&1.89\\
&& $\pm$0.4418i&&&&&$\pm$0.6798i&&\\
3,4&  $\frac{h_0}{4}$& 1.2028&1.48&0.97&3&$h_0$&2.0675&&0.63\\
 && $\pm$0.4413i&&&3&$\frac{h_0}{2}$&2.0443&2.00&0.64\\
3,4&  $\frac{h_0}{8}$& 1.2032&&0.96&3&$\frac{h_0}{4}$&2.0385&1.93&0.64\\
 && $\pm$0.4412i&&&3&$\frac{h_0}{8}$&2.0370&&0.65\\
2&  $h_0$&1.2880&&0.57&4&$h_0$&2.1037&&0.40\\
2&  $\frac{h_0}{2}$&1.2757&1.90&0.56&4&$\frac{h_0}{2}$&2.0716&1.90&0.40\\
2&  $\frac{h_0}{4}$&1.2724&1.89&0.56&4&$\frac{h_0}{4}$&2.0630&1.82&0.40\\
2&  $\frac{h_0}{8}$&1.2715&&0.56&4&$\frac{h_0}{8}$&2.0606&&0.40\\
\hline
\end{tabular}
\end{center}
\end{table}

\begin{table}
\begin{center} \footnotesize
\caption{The numerical eigenvalues by lowest order element on the L-shaped-domain with $h_0=\frac{\sqrt2}{16}$.}
\begin{tabular}{ccccc|ccccc}\hline
\multicolumn{5}{c|}{$N=[16~0;0~16+x_1-x_2]$}&\multicolumn{4}{c}{$N=[16~x_1;x_1~2]$}\\
$j$&$h$&$k_{j,h}$&$r_h$&$|f'_h(\tau_{j,h})|$&$j$&$h$&$k_{j,h}$&$r_h$&$|f'_h(\tau_{j,h})|$\\\hline
1&  $h_0$&1.1984&&0.66&1,2&  $h_0$&  1.5803&& 4.69\\
1&  $\frac{h_0}{2}$&1.1906&   1.89&0.67&&&    $\pm$1.007i\\
1&  $\frac{h_0}{4}$&1.1885&2.02&0.67&1,2&  $\frac{h_0}{2}$&1.5798&1.83& 4.67\\
1&  $\frac{h_0}{8}$& 1.1880&&0.67 && &     $\pm$1.004i\\
2,3&  $h_0$& 1.1968&&0.97 &1,2&  $\frac{h_0}{4}$&    1.5801&1.59&4.67\\
&& $\pm$0.4438i&&&&&    $\pm$1.004i && \\
2,3&  $\frac{h_0}{2}$&1.1994&1.44&0.96&1,2&  $\frac{h_0}{8}$&          1.5803&&4.67\\
 & &     $\pm$0.4420i&&&&&  $\pm$1.003i&& \\
2,3&  $\frac{h_0}{4}$&    1.2004&1.65&0.96&3&  $h_0$& 2.2286      &   1.98& 0.65\\
 &   &        $\pm$0.4414i&&&3&  $\frac{h_0}{2}$&   2.1812&& 0.65\\
2,3&  $\frac{h_0}{8}$&    1.2008&&0.96&3&  $\frac{h_0}{4}$& 2.1692     &   1.95& 0.65\\
 &   &        $\pm$0.4413i&&&3&  $\frac{h_0}{8}$&   2.1661&& 0.65\\
4&  $h_0$&      1.3006&&0.57&4&  $h_0$&     2.6139&&  0.83\\
4&  $\frac{h_0}{2}$&1.2881&1.92&0.57&4&  $\frac{h_0}{2}$&2.5446&1.94&  0.85\\
4&  $\frac{h_0}{4}$& 1.2848&1.95& 0.57&4&  $\frac{h_0}{4}$& 2.5266&1.91&  0.86\\
4&  $\frac{h_0}{8}$& 1.2839&& 0.57&4&  $\frac{h_0}{8}$& 2.5218&&  0.86
\\\hline
\end{tabular}
\end{center}
\end{table}

\indent{\bf Acknowledgements.}~~
This work  was partially  supported by
 supported by the National Natural Science Foundation of China (Grants. 12001130, 11871092, NSAF 1930402),
China Postdoctoral Science Foundation (no. 2020M680316), and Science and Technology Foundation of Guizhou Province
(no. ZK[2021]012).

\end{document}